\newcommand {\bR}{\mathbb R}
\newcommand {\bN}{\mathbb N}
\newcommand {\bZ}{\mathbb Z}
\newcommand {\bC}{\mathbb C}
\newcommand {\bQ}{\mathbb Q}
\newcommand {\Id}{\operatorname{Id}}
\newcommand {\be}{\mathbf{1}}
\newcommand{\cL}{\mathcal{L}}
\newcommand{\cA}{\mathcal{A}}
\newcommand{\cN}{\mathcal{N}}
\newcommand{\cH}{\mathcal{H}}
\newcommand{\cR}{\mathcal{R}}
\newcommand{\cP}{\mathcal{P}}
\newcommand {\RE}{\operatorname{Re}}
\newcommand {\pr}{\operatorname{pr}}
\newcommand{\fh}{\mathfrak{h}}
\newcommand{\fH}{\mathfrak{H}}
\newcommand {\z}{\zeta}
\newcommand{\sh}{\shuffle}
\newcommand{\sfS}{{\mathsf S}} 
\newcommand{\sfN}{{\mathsf{N}}} 
\newcommand{\myB}{{B}} 
\newtheorem{theorem}{Theorem}[section]
\newtheorem {lemma}[theorem]{Lemma}
\newtheorem {proposition}[theorem]{Proposition}
\newtheorem {definition}[theorem]{Definition}
\newtheorem {corollary}[theorem]{Corollary}
\newtheorem {remark}[theorem]{Remark}
\def \restr#1{\mathstrut_{\textstyle |}\raise-8pt\hbox{$\scriptstyle #1$}}
\def \srestr#1{\mathstrut_{\textstyle |}\raise-6pt\hbox{$\scriptscriptstyle #1$}}
\begin{document}

\title{Renormalisation group for multiple zeta values}

\author[K.~Ebrahimi-Fard]{Kurusch Ebrahimi-Fard}
\address{{ICMAT,C/Nicol\'as Cabrera, no.~13-15, 28049 Madrid, Spain}.{\tiny{On leave from UHA, Mulhouse, France.}}}
\email{kurusch@icmat.es, kurusch.ebrahimi-fard@uha.fr}
\urladdr{www.icmat.es/kurusch}

\author[D.~Manchon]{Dominique Manchon}
\address{Univ. Blaise Pascal, C.N.R.S.-UMR 6620, 3 place Vasar\'ely, CS 60026, 63178 Aubi\`ere, France}
\email{manchon@math.univ-bpclermont.fr}
\urladdr{http://math.univ-bpclermont.fr/~manchon/}

\author[J.~Singer]{Johannes Singer}
\address{Department Mathematik, Friedrich-Alexander-Universit\"at Erlangen-N\"urnberg, Cauerstra\ss e 11, 91058 Erlangen, Germany}
\email{singer@math.fau.de}
\urladdr{www.math.fau.de/singer}

\author[J.~Zhao]{Janqiang Zhao}
\address{ICMAT, C/Nicol\'as Cabrera, no.~13-15, 28049 Madrid, Spain}
\email{zhaoj@ihes.fr}
\urladdr{}
\subjclass[2010]{11M32,16T05}

\maketitle


\begin{abstract}

Calculating multiple zeta values at arguments of any sign in a way that is compatible with both the quasi-shuffle product as well as meromorphic continuation, is commonly referred to as the renormalisation problem for multiple zeta values. We consider the set of all solutions to this problem and provide a framework for comparing its elements in terms of a free and transitive action of a particular subgroup of the group of characters of the quasi-shuffle Hopf algebra. In particular, this provides a transparent way of relating different solutions at non-positive values, which answers an open question in the recent literature.

\end{abstract}


\tableofcontents

\section{Introduction}
\label{sect:intro}

\emph{Multiple zeta values} (MZVs) are defined for integers $k_1 \geq 2$, $k_2,\ldots,k_n \geq 1$ in terms of nested sums
\begin{equation}
\label{mzv}
 \zeta(k_1,\ldots,k_n):=\sum_{m_1 > \cdots > m_n>0}\frac{1}{m_1^{k_1} \cdots m_n^{k_n}}
\end{equation}
of depth $n$ and weight $\sum_{i=1}^n k_i$. It is well-known that \eqref{mzv} has a representation in terms of iterated Chen integrals
\begin{equation}
\label{eq:MZVint}
         \zeta(k_1,\ldots,k_n) = \int_{0}^{1} \omega_0^{k_1-1}\omega_1 \cdots \omega_0^{k_n-1}\omega_1,
\end{equation}
where $\omega_0$ and $\omega_1$ stand for the differential 1-forms $dt/t$ and $dt/(1-t)$ respectively. Writing the nested sums \eqref{mzv} in terms of iterated integrals \eqref{eq:MZVint} yields shuffle relations satisfied by MZVs due to integration by parts. On the other hand, for integers $a,b>1$, Nielsen's reflexion formula $\zeta(a)\zeta(b)=\zeta(a,b)+\zeta(b,a)+\zeta(a+b)$ follows from multiplying sums
\begin{equation}
\label{stuf}
	\sum_{m>0}\frac{1}{m^a}\sum_{n>0}\frac{1}{n^b}
	 = \sum_{m>n>0}\frac{1}{m^an^b} + \sum_{n>m>0}\frac{1}{n^bm^a} +  \sum_{m>0}\frac{1}{m^{a+b}},
\end{equation}
and generalises naturally to quasi-shuffle relations for the nested sums \eqref{mzv}, where weight is preserved but not depth. Comparing the two ways of multiplying MZVs yields intricate relations commonly referred to as double shuffle relations \cite{Ihara06}. The latter are just the tip of an iceberg of rich and deep mathematical structures \cite{Brown12,Hoffman05,Zagier12,Zudilin03}  displayed by multiple zeta values, which materialise through profound ramifications into modern developments in mathematics. Double zeta values have been considered by L.~Euler \cite{Euler1776}. MZVs appeared in full generality in a 1981 preprint of J.~\'Ecalle, in the context of resurgence theory in complex analysis \cite{Ecalle81}. A systematic study however started only a decade later with the seminal works of D.~Zagier \cite{Zagier94} and M.~Hoffman \cite{Hoffman97}. Moreover, MZVs and their generalisations, i.e., multiple polylogarithms, appear to play an important role in quantum field theory \cite{Broadhurst13}.\\

Definition \eqref{mzv} extends to complex arguments $(s_1,\ldots, s_n)$. The \emph{multiple zeta function} thus obtained is holomorphic in the domain $\hbox{Re}(s_1+\cdots+s_j)>j$, $j=1,\ldots,n$, \cite{Krattenthaler07}. In \cite{Akiyama01a,Zhao00} it was shown that this function can be meromorphically continued to $\mathbb{C}^n$ with singularities at
\begin{align*}
 s_1 & = 1, \\
 s_1 + s_2 & = 2,1,0,-2,-4,\ldots, \\
 s_1 + \cdots + s_j & \in \bZ_{\leq j} \hbox{ ~for~}j\ge 3.
\end{align*}

In light of the well-known result on the Riemann zeta function at negative integer values, together with the formal statement
$$
	\zeta(-a)\zeta(-b)=\zeta(-a,-b)+\zeta(-b,-a)+\zeta(-a-b),
$$
obtained by imitating \eqref{stuf}, it is natural to ask the question of how to extend MZVs beyond depth one to arguments in $\mathbb Z$, such that the quasi-shuffle relations are preserved. It turns out that the solution to this so-called renormalisation problem for MZVs at integer arguments is not unique. Indeed, different answers appeared in the literature. In \cite{Guo08} L.~Guo and B.~Zhang considered the problem for non-positive arguments. For strictly negative arguments, another approach has been proposed in \cite{Ebrahimi15b}, which is based on a one-parameter family of $q$-analogs of multiple zeta values. A general solution appeared in \cite{Manchon10}, where S.~Paycha and the second author found an approach that would allow for arguments in $\mathbb Z$ (including mixed signs). Comparing values of the three solutions, $ \zeta_{\textrm{\tiny{GZ}}}$, $\zeta_{\textrm{\tiny{EMS}}}$ and $\zeta_{\textrm{\tiny{MP}}}$, at negative arguments, shows that the approaches provide different answers. For example, for the MZV $\zeta(-1,-3)$ we have the following values:
\begin{align*}
 \zeta_{\textrm{\tiny{GZ}}}(-1,-3)=\frac{83}{64512}, \hspace{0.4cm}
 \zeta_{\textrm{\tiny{EMS}}}(-1,-3)= \frac{121}{94080}, \hspace{0.4cm}
 \zeta_{\textrm{\tiny{MP}}}(-1,-3)=\frac{1}{840}.
\end{align*}
Note that $\zeta_{\textrm{\tiny{EMS}}}(-1,-3)$ corresponds to the special value $t=1$ of the family
$$
	\zeta_{\textrm{\tiny{EMS}},t}(-1,-3)= \frac{1}{8064}\frac{166t^2+166t+31}{(4t+3)(4t+1)},
$$
which is defined for $\RE(t)>0$ \cite{Ebrahimi15b}. \\

In a nutshell, all three approaches are of Hopf algebraic nature, and start from Hoffman's quasi-shuffle Hopf algebra of words \cite{Hoffman00} together with the MZV-character. The latter is a particular algebra morphism which maps \emph{convergent} words to MZVs, i.e., nested sums \eqref{mzv}. Different regularisation methods are employed to deform this MZV-character to take values in a commutative unital Rota--Baxter algebra. Each regularised MZV-character is then uniquely factorised into a product of two characters by applying a general theorem due to A.~Connes and D.~Kreimer \cite{Connes00}. It turns out that, in the three approaches \cite{Ebrahimi15b,Guo08,Manchon10}, one of the factors yields finite values for MZVs at all, non-positive and negative integer arguments, respectively.\\

The factorisation theorem of A.~Connes and D.~Kreimer captures the renormalisation problem in perturbative quantum field theory using a Hopf algebra approach to the so-called BPHZ subtraction method. In quantum field theory it is known that different regularisation methods may yield different renormalised values. The concept of finite renormalisations \cite{Zavialov90} yields a way to relate those differing renormalised values. This provides the motivation for our work. We start from Hoffman's quasi-shuffle Hopf algebra, and outline a framework to compare all possible renormalisations of the MZV-character, which are compatible with both the quasi-shuffle product as well as the meromorphic continuation. A particular group, called \emph{renormalisation group of MZVs}, is shown to act freely and transitively on renormalised MZV-characters. It is identified as the character group of a commutative Hopf algebra, which is obtained as a quotient of Hoffman's quasi-shuffle Hopf algebra by the Hopf ideal generated by \emph{non-singular words}, i.e., words associated to multiple zeta values obtainable by analytic continuation. Elements of the renormalisation group of MZVs may be considered as finite renormalisations as they permit to relate any differing values of renormalised MZVs, including those in \cite{Ebrahimi15b,Guo08,Manchon10}, by showing that they are elements of a single orbit.\\

The paper is organised as follows. After a very brief review of the analytic continuation of the multiple zeta functions and its singularities in Section 2, we define the renormalisation group  in Section 3,
study its properties and compare currently known renormalisation schemes in Section 4. The last section
is devoted to the description of the renormalisation group whose associated Hopf algebra is obtained as the quotient of the quasi-shuffle Hopf algebra by a bi-ideal obtained in terms of the non-singular integer points of the multiple zeta function.\\
\medskip

{\bf{Acknowledgements}}
The first author is supported by Ram\'on y Cajal research grant RYC-2010-06995 from the Spanish government. DM, JS and JZ would like to thank the ICMAT for its warm hospitality and gratefully acknowledge support by the Severo Ochoa Excellence Program. The second author is partially supported by Agence Nationale de la Recherche (projet CARMA). We warmly thank Hidekazu Furusho, Fr\'{e}d\'{e}ric Patras and Wadim Zudilin for very useful comments.


\section{The meromorphic continuation of multiple zeta function}
\label{sec:mero}

It is a well-known fact that the classical Riemann zeta function $\zeta(s):=\sum_{m\geq 1} m^{-s}$ is convergent for $\RE(s)>1$ and can be meromorphically continued to $\bC$ with a single pole in $s=1$. Using the functional equation
\begin{align*}
 \zeta(1-s)=\frac{1}{(2\pi)^s}\cos\left( \frac{\pi s}{2} \right)\Gamma(s)\zeta(s),
\end{align*}
where $\Gamma(s)$ denotes the meromorphic continuation of the Gamma function, we can deduce the values of the Riemann zeta function at non-positive integers, that is, for the non-negative integer $l$
  \begin{align*}
   \z(-l) = -\frac{B_{l+1}}{l+1},
  \end{align*}
where the Bernoulli numbers $B_m$ are given by the generating function
 \begin{align*}
  \frac{te^t}{e^t-1} = \sum_{m\geq0} \frac{B_m}{m!}t^m.
 \end{align*}
Therefore the meromorphic continuation provides all values of $\zeta(k)$ for $k\in \bZ\setminus \{1\}$.\\

Next we consider the multiple zeta function $\zeta(s_1,\ldots,s_n)$, which is absolutely convergent if $\sum_{j=1}^k \RE(s_j)>k$ for $k=1,\ldots,n$ \cite{Krattenthaler07}. In this domain it defines an analytic function in $n$ variables. The precise pole structure of the multiple zeta function was clarified in \cite{Akiyama01a}:

 \begin{theorem}
\label{theo:meroz}
The function $\z(s_1,\ldots,s_n)$ admits a meromorphic extension to $\bC^n$. The subvariety $\sfS_n$ of singularities is given by
\begin{align*}
 \sfS_n= \left\{(s_1,\ldots,s_n)\in \bC^n\colon  \begin{array}{c}
                                                 s_1=1;~ s_1+s_2=2,1,0,-2,-4,\ldots;  \\
                                                 s_1 + \cdots +  s_j\in \bZ_{\leq j} ~(j=3,4,\ldots,n)
                                                \end{array}
 \right\}.
\end{align*}
\end{theorem}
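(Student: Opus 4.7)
The plan is to proceed by induction on the depth $n$ via the Euler--Maclaurin summation formula, following the approach of Akiyama--Egami--Tanigawa. The base case $n=1$ is the classical meromorphic continuation of the Riemann zeta function, which admits a single simple pole at $s=1$, in agreement with $\sfS_1=\{1\}$.

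For the induction step, fix $(s_2,\ldots,s_n)$ and decompose
\begin{equation*}
 \zeta(s_1,\ldots,s_n)=\sum_{m\ge n}\frac{1}{m^{s_1}}\,\zeta_m(s_2,\ldots,s_n),\qquad \zeta_m(s_2,\ldots,s_n):=\sum_{m>m_2>\cdots>m_n\ge 1}m_2^{-s_2}\cdots m_n^{-s_n}.
\end{equation*}
By the inductive hypothesis (together with an elementary summation by parts), $\zeta_m(s_2,\ldots,s_n)$ admits an asymptotic expansion as $m\to\infty$ of the form $\sum_k c_k(s_2,\ldots,s_n)\, m^{\sigma_k}$, where the $\sigma_k$ are integer shifts of linear combinations of $s_2,\ldots,s_n$ and the coefficients $c_k$ are meromorphic. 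Substituting this expansion into the outer sum and applying the Euler--Maclaurin formula to each resulting monomial series $\sum_m m^{-s_1+\sigma_k}$ extends the whole sum meromorphically to $s_1\in\bC$, with remainders that again have the same structure but to lower order; iteration then gives a meromorphic extension to all of $\bC^n$.

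To locate the singularities: Euler--Maclaurin applied to a pure power $\sum_m m^{-\sigma}$ produces a single simple pole at $\sigma=1$, because only even-index Bernoulli numbers $B_{2k}$ appear (the vanishing $B_{2k+1}=0$ for $k\ge 1$ being crucial). This yields $s_1=1$ from the first strip, and the loci $s_1+s_2\in\{2,1,0,-2,-4,\ldots\}$ from the second, where the asymmetric pattern (missing odd negative integers) is a direct consequence of the parity vanishing of the Bernoulli numbers. Starting from depth three, this parity cancellation no longer applies, since the effective integrand is a sum of power functions with shifted exponents arising from previous Euler--Maclaurin remainders, and consequently every integer value $s_1+\cdots+s_j\in\bZ_{\le j}$ contributes a pole for $j\ge 3$.

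The main obstacle lies in the bookkeeping: one has to verify that the candidate loci just described give \emph{all} singularities, and that none of the predicted poles is accidentally cancelled by the coefficients $c_k$ produced in the inductive step. In particular, confirming regularity at odd negative integer values of $s_1+s_2$ requires a clean separation of the inductive asymptotic contributions from the Euler--Maclaurin tail, while confirming non-removability of each predicted singularity for $j\ge 3$ requires exhibiting an explicit limiting direction along which the function blows up. As a cleaner alternative, Zhao's method packages the meromorphic continuation into a Mellin--Barnes contour integral representation whose gamma-factor poles can be read off directly to recover $\sfS_n$, thereby bypassing most of the combinatorial case analysis.
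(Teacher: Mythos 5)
This theorem is stated in the paper without proof: it is quoted directly from Akiyama--Egami--Tanigawa \cite{Akiyama01a} (with \cite{Zhao00} also cited in the introduction for the meromorphic continuation), so there is no internal argument to compare yours against. Your sketch follows precisely the strategy of the cited source --- induction on depth, Euler--Maclaurin applied to the sum over the outermost variable, and the vanishing of the odd Bernoulli numbers accounting for the regularity at $s_1+s_2=-1,-3,-5,\ldots$ --- and your suggested alternative is essentially the method of the other cited reference. The one caveat: what you call ``bookkeeping'' is the mathematical substance of the statement. The Euler--Maclaurin recursion rather directly yields holomorphy off the listed hyperplanes (i.e.\ that the singular set is \emph{contained} in $\sfS_n$); the claim that every listed hyperplane, in particular every $s_1+\cdots+s_j\in\bZ_{\le j}$ for $j\ge 3$, is a genuine (non-removable) singularity is the part requiring the explicit blow-up analysis you defer, and it matters for this paper, since Definition \ref{def:non-singular} and the coideal $N$ are built on the exact description of $\sfS_n$. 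As a proof proposal your outline is sound, but it should be understood as a roadmap to \cite{Akiyama01a} rather than a self-contained argument.
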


To illustrate the need of renormalisation procedures for MZVs let us restrict to non-positive integer arguments. As stated above, in the case of the Riemann zeta function all values at non-positive integers are prescribed by the meromorphic continuation. In length two we observe for $k_1+k_2$ odd ($k_1,k_2\in \bN_0$) that
\begin{align*}
 \z_2(-k_1,-k_2) = \frac{1}{2} \big(1+\delta_0(k_2) \big) \frac{B_{k_1+k_2+1}}{k_1+k_2+1},
\end{align*}
 whereas when the sum $k_1+k_2$ is even we do not have any information due to the existence of poles in the points $(-k_1,-k_2)$. In length greater than two there are points of indeterminacy for any non-positive integer arguments, i.e., $(\bZ_{\leq 0})^n \subseteq \sfS_n$, $n\geq 3$. Therefore the meromorphic continuation does not prescribe all values.\\

This lack of information in the context of the meromorphic continuation motivated several approaches aiming at obtaining explicit values at points of indeterminacy. The first approach is due to S.~Akiyama et al.~\cite{Akiyama01b}. It describes a limiting process to define MZVs at non-positive integer arguments. However, it turns out that the values depend on the order of conducting the limits. Additionally, the quasi-shuffle relations are not verified. 

Furthermore let us mention the unpublished work of O. Bouillot \cite{Bouillot13}, in which the author considers solely the algebraic side of the renormalisation problem of MZVs. He obtains a family of values called \emph{multiple Bernoulli numbers} for MZVs at non-positive arguments, which are all compatible with the quasi-shuffle relations. However, the problem of meromorphic continuation has not been addressed in a complete way.

H.~Furusho et al.~proposed in \cite{Furusho15} a different approach, called \emph{desingularisation of MZVs}. They showed that certain finite sums of multiple zeta functions display non-trivial cancellations of all singularities involved, which therefore provide entire functions. This yields finite values for any tuple of integer arguments. Albeit, questions concerning the algebraic properties (e.g., compatibility with the quasi-shuffle product) have not been addressed in \cite{Furusho15}.

In references \cite{Ebrahimi15b,Guo08,Manchon10} a rather different approach is explored. It is based on a fundamental factorisation theorem for algebra morphisms over connected filtered Hopf algebras, which is due to A.~Connes and D.~Kreimer \cite{Connes00}. See \cite{Manchon08} for details. As a result renormalised MZVs can be deduced, which are compatible with the quasi-shuffle product as well as the meromorphic continuation.


\section{The left group action}
\label{sect:LGA}

Let $(\cH,m,u,\Delta, \varepsilon, S)$ be a connected, filtered Hopf algebra over a field $k$ of characteristic zero. Therefore the coproduct $\Delta$ is also conilpotent. Further let $\widetilde{\Delta}$ denote the reduced coproduct defined by $\widetilde{\Delta}(x) := \Delta(x) - 1\otimes x - x\otimes 1$ for any $x\in \ker(\varepsilon)$. In Sweedler's notation we have
\begin{align*}
 \widetilde\Delta(x)=\sum_{(x)}x'\otimes x''.
\end{align*}

Let $N\subseteq \cH$ be a left coideal with respect to the reduced coproduct, i.e., $\widetilde{\Delta}(N)\subseteq N\otimes \cH$ and $\varepsilon(N)=\{0\}$. We call it the \emph{coideal of non-singular elements} by anticipating Lemma \ref {lem:coideal-N}. Moreover let $(\cA,m_\cA,u_\cA)$ be a $k$-algebra and let $\cL(\cH,\cA)$ be the vector space of linear functions from $\cH$ to $\cA$. The set $G_{\cA}$ of unital algebra morphisms from $\cH$ to $\cA$ is a group with respect to the convolution product $\star\colon \cL(\cH,\cA)\otimes \cL(\cH,\cA) \to \cL(\cH,\cA)$ defined by $ \phi \otimes \psi\mapsto  \phi \star \psi :=m_{\cA} \circ (\phi \otimes \psi)\circ \Delta$ and with unit $e:=u_\cA \circ \varepsilon$. The inverse of $\phi \in G_{\cA}$ is given by $ \phi^{-1}=\phi \circ S$. Since $\cH$ is connected and filtered we obtain the antipode by
\begin{align}
\label{eq:antipode}
 S(x) = -x - \sum_{(x)} m\big(x' \otimes S(x'')\big)
\end{align}
for any $x\in \ker(\varepsilon)$.

\begin{lemma}
The set $T_{\cA}:= \big\{\phi \in G_{\cA} \colon \phi\restr{N}=0 \big\}$ is a subgroup of $(G_{\cA},\star, e)$.
\end{lemma}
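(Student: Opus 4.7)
The plan is to verify the three subgroup axioms directly, using only the two defining properties of $N$ (the left-coideal condition $\widetilde\Delta(N)\subseteq N\otimes\cH$ and $\varepsilon(N)=\{0\}$) together with the fact that every element of $G_{\cA}$ is a unital algebra morphism. The condition $\varepsilon(N)=\{0\}$ immediately yields $e=u_{\cA}\circ\varepsilon\in T_{\cA}$, which handles the neutral element.

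For closure under convolution, I would take $\phi,\psi\in T_{\cA}$ and $x\in N$, split $\Delta(x)=1\otimes x+x\otimes 1+\widetilde\Delta(x)$ (which is legal because $N\subseteq\ker\varepsilon$), and expand
$$
(\phi\star\psi)(x)=\phi(x)\psi(1)+\phi(1)\psi(x)+\sum_{(x)}\phi(x')\psi(x'').
$$
The first two terms vanish since $\phi$ and $\psi$ both annihilate $N$. The remaining sum vanishes thanks to the left-coideal property $\widetilde\Delta(N)\subseteq N\otimes\cH$, which forces every $x'$ to lie in $N$ and hence $\phi(x')=0$. Therefore $\phi\star\psi\in T_{\cA}$.

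For closure under inversion, I would apply $\phi$ to the antipode recursion \eqref{eq:antipode} and use that $\phi$ is an algebra morphism to obtain
$$
\phi^{-1}(x)=\phi\bigl(S(x)\bigr)=-\phi(x)-\sum_{(x)}\phi(x')\,\phi\bigl(S(x'')\bigr),
$$
which again vanishes because $\phi(x)=0$ and $\phi(x')=0$ (the latter by the coideal property); note that no control on $\phi\bigl(S(x'')\bigr)$ is needed. I do not expect a genuine obstacle in the argument; the one conceptual point worth highlighting is that only the \emph{left} tensor factor of $\widetilde\Delta(x)$ must remain in $N$ for both the multiplicative and antipodal checks, which is precisely the content of $N$ being a left coideal, so no stronger two-sided coideal hypothesis is required.
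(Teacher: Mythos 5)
Your proof is correct and uses the same essential ingredients as the paper's: splitting $\Delta(x)$ off $\ker\varepsilon$, the left-coideal property to kill the left tensor factors, and the antipode recursion \eqref{eq:antipode}. The only difference is organizational — you check closure under $\star$ and under inversion separately, whereas the paper runs the one-step subgroup test on $\phi\star\psi^{-1}$ — so the two arguments are essentially identical.
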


\begin{proof}
 Obviously, $e\in T_{\cA}$. Let $\phi,\psi \in T_{\cA}$. Since $G_{\cA}$ is a group $\phi \star \psi^{-1} \in G_{\cA}$. Further, for any $w\in N$, we have
  \begin{align*}
  (\phi \star \psi^{-1})(w)
  & = \big(\phi \star (\psi\circ S)\big)(w) \\
  & =  \phi(w) + \psi\big(S(w)\big) + \sum_{(w)}m_{\cA}\big(\phi(w') \otimes \psi(S(w''))\big) \\
  & = \phi(w) -\psi(w) - \sum_{(w)}m_{\cA}\big(\psi(w')\otimes \psi(S(w''))\big) + \sum_{(w)}m_\cA\big(\phi(w')\otimes \psi(S(w''))\big) = 0
 \end{align*}
because of \eqref{eq:antipode} and the fact that $N$ is a left-coideal of $\widetilde{\Delta}$ by definition.
\end{proof}

The subgroup $T_{\cA}$ will be called the \emph{renormalisation group} and its elements are called {\it{transfer characters}}. Now let $\zeta:N\to \cA$ be a partially defined character on $\cH$, i.e., a linear map such that $\zeta(\be)=\be_{\cA}$ and $\zeta(m(v \otimes w))=m_{\cA}(\zeta(v) \otimes \zeta(w))$ as long as $v$, $w$ and the product $m(v \otimes w)$ belong to $N$. We define the set of all possible renormalisations with target algebra $\cA$ by
\begin{align*}
 X_{\cA,\zeta}:= \big\{ \alpha \in G_{\cA}\colon \alpha\restr{N}= \zeta \big\}.
\end{align*}

\begin{theorem}\label{theo:groupaction}
The left group action $T_{\cA} \times X_{\cA,\zeta} \to X_{\cA,\zeta}$ defined by $(\phi,\alpha)\mapsto \phi \star \alpha$ is free and transitive.
\end{theorem}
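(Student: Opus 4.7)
\bigskip

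\noindent\textbf{Proof proposal.} The plan is to verify in turn that the map is well-defined as an action on $X_{\cA,\zeta}$, that it is free, and that it is transitive; the last of these requires a short inductive argument using the filtration on $\cH$.

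\smallskip

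\noindent\emph{Well-definedness.} First I would show that for $\phi\in T_{\cA}$ and $\alpha\in X_{\cA,\zeta}$ the convolution product $\phi\star\alpha$ still restricts to $\zeta$ on $N$. For $w\in N$ the coideal hypothesis $\widetilde{\Delta}(N)\subseteq N\otimes\cH$ guarantees that in the Sweedler expansion $\widetilde{\Delta}(w)=\sum_{(w)}w'\otimes w''$ every $w'$ lies in $N$. Hence
\begin{align*}
(\phi\star\alpha)(w) = \phi(w)\,\alpha(\be) + \phi(\be)\,\alpha(w) + \sum_{(w)}\phi(w')\,\alpha(w'') = \alpha(w)=\zeta(w),
\end{align*}
using $\phi(w)=0$ and $\phi(w')=0$. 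Associativity and unitality of the action follow from the fact that $\star$ is associative on $\cL(\cH,\cA)$ with unit $e$.

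\smallskip

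\noindent\emph{Freeness.} Since $G_{\cA}$ is a group under $\star$ and $T_{\cA}\subseteq G_{\cA}$, the identity $\phi\star\alpha=\alpha$ yields $\phi=\alpha\star\alpha^{-1}=e$ upon convolving on the right by $\alpha^{-1}$, which establishes freeness.

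\smallskip

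\noindent\emph{Transitivity.} Given $\alpha,\beta\in X_{\cA,\zeta}$, the natural candidate is $\phi:=\beta\star\alpha^{-1}\in G_{\cA}$, which visibly satisfies $\phi\star\alpha=\beta$. The only nontrivial point—and the main obstacle—is to show that $\phi$ vanishes on $N$. I would prove this by induction on the filtration degree. For $w\in N\cap \cH_1\setminus\{0\}$ one has $\widetilde{\Delta}(w)=0$, so $S(w)=-w$ by \eqref{eq:antipode} and
\begin{align*}
\phi(w)=\beta(w)+\alpha\bigl(S(w)\bigr)=\zeta(w)-\zeta(w)=0.
\end{align*}
Assume that $\phi$ vanishes on $N\cap\cH_n$ and let $w\in N\cap\cH_{n+1}$. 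Because $\cH$ is connected and filtered, each $w'$ appearing in $\widetilde{\Delta}(w)$ lies in $\cH_{\leq n}$, and by the coideal property $w'\in N\cap\cH_{\leq n}$, whence $\phi(w')=0$ by the induction hypothesis. Evaluating $\phi\star\alpha=\beta$ at $w$ then gives
\begin{align*}
\beta(w)=\phi(w)+\alpha(w)+\sum_{(w)}\phi(w')\,\alpha(w'')=\phi(w)+\alpha(w),
\end{align*}
so $\phi(w)=\beta(w)-\alpha(w)=\zeta(w)-\zeta(w)=0$. This completes the induction and shows $\phi\in T_{\cA}$, establishing transitivity.
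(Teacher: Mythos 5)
Your proposal is correct, and the well-definedness and freeness parts coincide with the paper's argument (the paper simply declares freeness ``obvious''). For transitivity, however, you take a genuinely different route. The paper fixes $\alpha,\beta\in X_{\cA,\zeta}$ and verifies directly that $(\alpha\star\beta^{-1})(w)=0$ for $w\in N$ by expanding $\beta\bigl(S(w)\bigr)$ via the recursive antipode formula \eqref{eq:antipode} and the multiplicativity of $\beta$; the terms involving $\beta(S(w''))$ then pair off against each other, leaving $(\alpha-\beta)(w)+\sum_{(w)}m_{\cA}\bigl((\alpha-\beta)(w')\otimes\beta(S(w''))\bigr)$, which vanishes by the left-coideal property and $\alpha\vert_N=\beta\vert_N=\zeta$. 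You instead avoid the antipode almost entirely: you solve the equation $\phi\star\alpha=\beta$ for $\phi(w)$ by induction on the filtration degree, using conilpotency to place the left Sweedler legs $w'$ in $N\cap\cH_{\le n}$ where the induction hypothesis applies. Both arguments are sound and of comparable length; the paper's is a one-shot computation but leans on the explicit form of $S$, while yours makes the role of connectedness and conilpotency visible and would transfer verbatim to settings where one prefers not to manipulate the antipode. Two small points worth tightening: (i) your base case could be absorbed into the inductive step, since for primitive $w$ the sum over $\widetilde{\Delta}(w)$ is empty and the same identity $\beta(w)=\phi(w)+\alpha(w)$ gives $\phi(w)=0$ without invoking $S(w)=-w$; (ii) the statement ``every $w'$ lies in $N$'' is, strictly speaking, an assertion that $\widetilde{\Delta}(w)$ admits a representative in $(N\cap\cH_{\le n})\otimes\cH$, which follows from $\widetilde{\Delta}(w)\in(N\otimes\cH)\cap(\cH_{\le n}\otimes\cH)=(N\cap\cH_{\le n})\otimes\cH$ --- a standard fact about tensor products of subspaces, but one you should be aware you are using.
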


\begin{proof}
First, we prove that the group action is well-defined. Let $\phi \in T_{\cA}$ and $\alpha \in X_{\cA,\zeta}$. For any $w\in N$ we obtain
  \begin{align*}
  (\phi \star \alpha)(w)
  	&= \phi(w) + \alpha(w)  + \sum_{(w)}m_{\cA}\big(\phi(w')\otimes \alpha(w'')\big)\\
  	& = \alpha(w) = \zeta(w),
 \end{align*}
 using that $\phi$ vanishes on the left-coideal $N$. The identity and compatibility relations of the group action are satisfied as $X_{\cA,\zeta}\subset G_{\cA}$. Freeness is obvious. In order to prove transitivity, let $\alpha,\beta \in X_{\cA,\zeta}$. Then for any $w\in N$, we have
 \begin{align*}
  (\alpha \star \beta^{-1})(w)
  & = \big(\alpha \star (\beta\circ S)\big)(w) = \alpha(w) + \beta\big(S(w)\big) + \sum_{(w)} m_\cA\big( \alpha(w') \otimes   \beta(S(w''))\big) \\
  & = \alpha(w) - \beta(w) - \sum_{(w)}  m_{\cA}\big(\beta(w') \otimes \beta(S(w''))\big)  + \sum_{(w)} m_{\cA}\big(\alpha(w') \otimes \beta(S(w''))\big) \\
  & = (\alpha-\beta)(w) + \sum_{(w)} m_{\cA} \big((\alpha-\beta)(w') \otimes \beta(S(w'')\big) = 0,
 \end{align*}
using $\beta \in G_{\cA}$ and the left-coideal property of $N$ together with $\alpha\restr{N}=\beta\restr{N}=\zeta$. Hence, $\phi:=\alpha \star \beta^{-1}\in T_{\cA}$ leads to $\alpha=\phi \star \beta$, which concludes the proof.
\end{proof}


\section{Renormalised MZVs and the renormalisation group}
\label{sect:renormMZVsRG}

In this section we exemplify the left group action described in the previous section in the context of Hoffman's quasi-shuffle Hopf algebra. The left coideal $N$ is the linear span of non-singular words defined through meromorphic continuation of the MZV-function. Finally we compare the different sets of renormalised MZVs which appeared in the literature.


\subsection{Group action in the MZV case}
\label{ssect:RGMZVs}

We quickly describe the \emph{quasi-shuffle Hopf algebra} \cite{Hoffman00} together with the set of non-singular words. Let us start with the alphabet $Y_+:=\{z_k\colon k \in \bZ_{>0}\}$ of positive letters.  Further let $\langle Y_+\rangle_\bQ$ denote the span of words with letters in $Y_+$ and let $\fh:=\bQ\langle Y_+\rangle$ be the free non-commutative $\mathbb Q$-algebra generated by $Y_+$. The empty word is denoted by $\be$. The \emph{length} of a word is defined by its number of letters. We define the \emph{quasi-shuffle product} $\ast \colon \fh \otimes \fh \to \fh$ by
\begin{enumerate}[(i)]
 \item $\be \ast v :=v\ast \be := v$,
 \item $z_m v \ast z_n w:= z_m(v \ast z_n w) + z_n(z_m v \ast w) + z_{m+n} (v \ast w)$,
\end{enumerate}
for any words $v,w \in \fh$ and integers $m,n > 0$. The unit map $u \colon \bQ \to \fh$ is defined by $u(\lambda) = \lambda \be$. The subspace $\fh^+:=\bQ\be \oplus \bigoplus_{n>1}z_n\fh$ spanned by \emph{convergent words} is a subalgebra of $\fh$. Next we define the \emph{MZV-character} $\zeta^\ast\colon (\fh^+,\ast)\to (\bR,\cdot)$ by $\z^\ast(\be):=1$ and
\begin{align}
\label{eq:zetaast}
 \zeta^\ast(z_{k_1}\cdots z_{k_n}):=\zeta(k_1,\ldots,k_n).
\end{align}

\noindent The terminology is justified by the following well-known result:
\begin{lemma}\label{lem:qscharacter}
The map $\zeta^\ast\colon (\fh^+,\ast) \to (\bR,\cdot)$ is a morphism of algebras.
\end{lemma}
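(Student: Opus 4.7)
The plan is to detour through truncated multiple sums, which are well-defined for arbitrary words (not just convergent ones), prove the analogue of the identity at that level by a clean induction on total length, and pass to the limit in the convergent case. By bilinearity of both sides I reduce to the case where $u=z_{k_1}\cdots z_{k_p}$ and $v=z_{l_1}\cdots z_{l_q}$ are admissible words (i.e.\ $k_1,l_1\geq 2$). A preliminary observation is that $u\ast v\in\fh^+$, so that the right-hand side is defined: the recursive definition of $\ast$ shows that every word occurring in $u\ast v$ starts with $z_{k_1}$, $z_{l_1}$, or $z_{k_1+l_1}$, each of which has index $\geq 2$.

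For the core step I introduce, for any word $w=z_{a_1}\cdots z_{a_r}$ and any integer $N\geq 1$, the truncated sum
\begin{equation*}
S_N(w):=\sum_{N>m_1>\cdots>m_r>0}\frac{1}{m_1^{a_1}\cdots m_r^{a_r}},\qquad S_N(\be):=1,
\end{equation*}
extended linearly to all of $\fh$. The key claim, proved by induction on $p+q$, is that
\begin{equation*}
S_N(u)\,S_N(v)\;=\;S_N(u\ast v)
\end{equation*}
for all words $u,v\in\fh$ (convergent or not) and all $N\geq 1$. Writing $u=z_m u'$ and $v=z_n v'$, I would expand the product as a double sum and split the domain $(m_1,n_1)\in\{1,\ldots,N-1\}^2$ into the three regions $m_1>n_1$, $m_1<n_1$, $m_1=n_1$. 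Using the elementary identity $\sum_{n_1<m_1} n_1^{-n}S_{n_1}(v')=S_{m_1}(v)$ (and its symmetric counterpart) together with the inductive hypothesis applied respectively to the strictly shorter pairs $(u',v)$, $(u,v')$, $(u',v')$, the three contributions collapse to $S_N(z_m(u'\ast v))$, $S_N(z_n(u\ast v'))$, $S_N(z_{m+n}(u'\ast v'))$, whose sum is precisely $S_N(u\ast v)$ by the recursive definition of $\ast$.

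Finally, since $u$, $v$ and $u\ast v$ all lie in $\fh^+$, the partial sums $S_N(u)$, $S_N(v)$, $S_N(u\ast v)$ converge absolutely to $\zeta^\ast(u)$, $\zeta^\ast(v)$, $\zeta^\ast(u\ast v)$ as $N\to\infty$, and passing to the limit in the truncated identity yields the lemma. The point I expect to require the most care is exactly why the detour through $S_N$ is needed: a naive induction directly on $\zeta^\ast$ fails because intermediate factors such as $u'$ need not be admissible, while $S_N$, being a finite sum, is defined on the whole of $\fh$ and so accommodates the recursion even when non-convergent words appear at intermediate stages.
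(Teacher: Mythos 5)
Your argument is correct. The paper itself offers no proof of this lemma, citing it as a well-known result (it goes back to Hoffman's work on quasi-shuffle products), and the proof you give is exactly the standard one: establish $S_N(u)S_N(v)=S_N(u\ast v)$ for truncated sums on all of $\fh$ by induction on total length, observe that $u\ast v$ remains in $\fh^+$ when $u,v\in\fh^+$, and pass to the limit $N\to\infty$. Your closing remark correctly identifies the one genuine subtlety, namely that the induction must be run at the level of the finite sums $S_N$ because the intermediate words $u'$, $v'$ need not be admissible.
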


We now complete the alphabet $Y_+$ to $Y:=Y_+ \cup Y_-=\{z_k\colon k \in \bZ\}$, by adding the non-positive letters $Y_-:=\{z_k\colon k \in \bZ_{\le 0}\}$. Let $\cH:=\bQ\langle Y\rangle$ be the free $\mathbb Q$-algebra generated by $Y$. The quasi-shuffle product is naturally extended, i.e., $\ast \colon \cH \otimes \cH \to \cH$, and both $\fh$ and $\fh^+$ are subalgebras of $(\cH,\ast)$. The subspace $\fH:=\langle Y_-\rangle_\bQ \subset \cH$ forms a subalgebra in $\cH$. Furthermore, we introduce another subspace $\fH^-:=\langle \widehat{Y}_-\rangle_\bQ \subset \fH$, which is defined in terms of the negative alphabet $\widehat{Y}_-:=\{z_k\colon k \in \bZ_{< 0}\}$, and which is also a subalgebra. Introducing $\cH$ is motivated by the fact that, in the sense of formal series, we can extend the definition of the map \eqref{eq:zetaast} together with Lemma \ref{lem:qscharacter} from $\fh^+$ to $\cH$, since the formal product rule of nested sums leads to the quasi-shuffle product as defined over $\cH$.\\

The quasi-shuffle algebra $(\cH,\ast)$ is a connected, filtered Hopf algebra with counit $\varepsilon :  \cH  \to \bQ$ defined by $\varepsilon(\be) =1$, and $\varepsilon(w) = 0$ for any $w\neq \be$. The coproduct  $\Delta\colon \cH \to \cH \otimes \cH$ is given for any word $w \in \cH$ by deconcatenation
\begin{align}\label{deconcat}
 \Delta(w):=\sum_{uv=w}u\otimes v.
\end{align}
The antipode $S: \cH \to \cH$ is given by the general formula \eqref{eq:antipode}. Note that both subalgebras $\fH^- \subset \fH \subset \cH$ are in fact Hopf subalgebras. Following \cite{Hoffman00} the antipode $S$ can be given by a non-recursive description in terms of {\it{word contractions}}, i.e., for any word $w=z_{k_1}\cdots z_{k_n} \in \cH$ with $n$ letters
\begin{align*}
	S(w) = (-1)^n \sum_{I \in \cP(n)} I[\overline{w}].
\end{align*}
Here $\overline{w}:=z_{k_n}\cdots z_{k_1}$, and $\cP(n)$ is the set of compositions of the integer $n$, i.e., the set of sequences  $I:=(i_1,\ldots,i_m)$ of positive integers such that $i_1+\cdots +i_m=n$. Then for any word $w=z_{k_1}\cdots z_{k_n}$ and any composition  $I=(i_1,\ldots,i_m)$ of $n$ we define the \emph{contracted word} by
\begin{align*}
 I[w]:=z_{k_1+\cdots +k_{i_1}}z_{k_{i_1+1}+\cdots +k_{i_1+i_2}} \cdots z_{k_{n-i_m+1}+\cdots +k_n}.
\end{align*}

\noindent The notion of contracting words underlies also \textsl{Hoffman's exponential} \cite{Hoffman00}:
\begin{align}
\label{HoffExp}
	\exp_H(u)&:=\sum_{I=(i_1,\ldots ,i_k)\in\cP(n)}\frac{1}{i_1! \cdots i_k!}I[u].
\end{align}
It defines a Hopf algebra isomorphism from the shuffle Hopf algebra ${\cH}_\sh$ onto the quasi-shuffle Hopf algebra ${\cH}$. The former is defined as the free $\mathbb Q$-algebra $\bQ\langle Y\rangle$ generated by $Y$ and equipped with the shuffle product
\begin{enumerate}[(i)]
 \item $\be \sh v :=v \sh \be := v$,
 \item $z_m v \sh z_n w:= z_m(v \sh z_n w) + z_n(z_m v \sh w)$,
\end{enumerate}
for any $v,w\in \bQ\langle Y\rangle$ and $m,n\in \bZ$. The inverse $\log_H$ of $\exp_H$ is given by~ (\cite[Lemma 2.4]{Hoffman00}):
\begin{align*}
	\log_H(u)=\sum_{I=(i_1,\ldots ,i_k)\in\cP(n)}\frac{(-1)^{n-k}}{i_1\cdots i_k}I[u].
\end{align*}
For example for  $z_{k_1},z_{k_2} \in Y$ we have that at length one $\exp_H z_{k_1} =z_{k_1}$ and $\log_H z_{k_1} =z_{k_1}$. For words of length two
\begin{align*}
\exp_H (z_{k_1}z_{k_2}) =z_{k_1}z_{k_2}+\frac{1}{2}z_{k_1+k_2},\quad
\log_H (z_{k_1}z_{k_2}) =z_{k_1}z_{k_2}-\frac{1}{2}z_{k_1+k_2}.
\end{align*}

The group of $\bC$-valued characters over $\cH$ is denoted by $G_\mathbb{C}$. Its corresponding Lie algebra of infinitesimal $\bC$-valued characters is denoted by $g_\mathbb{C}$. The group of $\bC$-valued characters over the Hopf algebra $\fH^-$ is denoted by $G^-_\mathbb{C}$ and its Lie algebra of infinitesimal $\bC$-valued characters is $g^-_\mathbb{C}$.
\goodbreak
\begin{definition}[Non-singular words]\label{def:non-singular}
~
\begin{enumerate}[a)]
 \item A word $w:=z_{k_1}\cdots z_{k_n}$ with letters from the alphabet $Y$ will be called \emph{non-singular} if all of the following conditions are verified:
\begin{itemize}
 \item $k_1\neq 1$,
 \item $k_1+k_2\notin\{2,1,0,-2,-4,\ldots\}$,
 \item $k_1+\cdots+k_j\notin \mathbb Z_{\le j}$ for $j\ge 3$.
\end{itemize}
\item The vector subspace spanned by non-singular words is denoted by $N\subseteq \cH$.
\end{enumerate}
\end{definition}

The vector space $N$ is naturally graded by length, i.e., $N=\bigoplus_{l\ge 1} N_l$, where $N_n$ denotes the space of $\bQ$-linear combinations of non-singular words of length $n$. Note that $N$ includes the set of convergent words.\\

\noindent The partially defined character $\zeta^\ast \colon N\to \bC$ for any word $w=z_{k_1}\cdots z_{k_n}\in N$ is given by
\begin{align*}
 \zeta^\ast(w):=\zeta(k_1,\ldots,k_n),
\end{align*}
which is either convergent or can be defined by analytic continuation (see Section \ref{sec:mero}), and linearly extended to $N$.

\begin{lemma}\label{lem:coideal-N}
The vector space $N$ is a left coideal for the reduced deconcatenation coproduct $\widetilde{\Delta}$. Moreover, $N$ is invariant under contractions, i.e., for any word $w \in N$ we have $I[w]\in N$.
\end{lemma}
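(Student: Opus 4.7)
The plan is to split the proof into two parts, each of which reduces to checking the three conditions of Definition 3.3 on certain derived words; linearity then extends the conclusion to all of $N$.

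For the left-coideal property, I would use that the deconcatenation coproduct \eqref{deconcat} yields
$$\widetilde\Delta(z_{k_1}\cdots z_{k_n}) = \sum_{j=1}^{n-1} z_{k_1}\cdots z_{k_j} \otimes z_{k_{j+1}}\cdots z_{k_n},$$
so it suffices to show that every non-empty proper prefix $u = z_{k_1}\cdots z_{k_j}$ of a non-singular word $w = z_{k_1}\cdots z_{k_n}$ is itself non-singular. This is immediate from Definition 3.3: the conditions required for $u$ to be non-singular are literally the first $j$ among those already assumed for $w$. Hence $\widetilde{\Delta}(N)\subseteq N\otimes \cH$.

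For contraction invariance, I would fix $w = z_{k_1}\cdots z_{k_n}\in N$ and a composition $I = (i_1,\ldots,i_m)\in\cP(n)$, then set $s_j := i_1 + \cdots + i_j$ so that $I[w] = z_{\ell_1}\cdots z_{\ell_m}$ with $\ell_j = k_{s_{j-1}+1} + \cdots + k_{s_j}$ and, crucially,
$$\ell_1 + \cdots + \ell_j = k_1 + \cdots + k_{s_j}, \qquad s_j \geq j.$$
The heart of the argument is then the observation that, as soon as $s_j \geq 3$, non-singularity of $w$ gives $k_1 + \cdots + k_{s_j} \notin \mathbb{Z}_{\leq s_j}$; since the $k_i$ are integers this forces the partial sum to be $\geq s_j + 1$, hence automatically outside $\mathbb{Z}_{\leq j}\subseteq\mathbb{Z}_{\leq s_j}$, outside $\{2,1,0,-2,-4,\ldots\}$, and different from $1$. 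This single inclusion disposes of every non-singularity condition on $I[w]$ whose relevant $s_j$ is at least $3$.

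What remains is a finite boundary check when $s_j\in\{1,2\}$, which I would dispatch by direct inspection: $\ell_1\neq 1$ when $s_1=1$ follows from $k_1\neq 1$; when $s_1=2$ it follows from $k_1+k_2 \notin\{2,1,0,-2,-4,\ldots\}$; and when $s_2 = 2$ the required condition on $\ell_1+\ell_2$ is exactly the hypothesis on $k_1+k_2$. There is no conceptual obstacle here; the only thing to be careful about is the bookkeeping across these small-$s_j$ cases, since the shape of the non-singularity condition in Definition 3.3 changes depending on whether the length in question is $1$, $2$, or $\geq 3$.
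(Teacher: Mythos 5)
Your proof is correct and follows essentially the same route as the paper: prefixes of non-singular words are non-singular (giving the left-coideal property), and contraction invariance reduces to checking the defining conditions on the partial sums $\ell_1+\cdots+\ell_j=k_1+\cdots+k_{s_j}$ with $s_j\geq j$. The paper dismisses the contraction part as ``immediate from Definition \ref{def:non-singular}''; your observation that $s_j\geq 3$ forces the partial sum to be $\geq s_j+1$ and hence clears all three conditions at once is exactly the bookkeeping that makes it immediate, so you have simply written out what the paper leaves to the reader.
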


\begin{proof}
Let $w:=z_{k_1}\cdots z_{k_n}$ be a word in $N$. Then it satisfies the conditions of Definition \ref{def:non-singular} a) in the length $n$ case. For any $m\in \{1,\ldots,n\}$ the subword $w':=z_{k_1}\cdots z_{k_m}$ by construction satisfies the conditions of Definition \ref{def:non-singular} a) in the case of length $m$ as well. Hence, $\widetilde{\Delta}(N)\subseteq N\otimes \cH$. The stability of $N$ under contractions is immediate from Definition \ref{def:non-singular}.
\end{proof}

\begin{corollary}
 The vector space $N$ is a two-sided coideal for the deconcatenation coproduct $\Delta$ defined in \eqref{deconcat}.
\end{corollary}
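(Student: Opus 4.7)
The plan is to deduce the two-sided coideal property directly from the preceding lemma by splitting the full coproduct into its reduced part plus the two boundary terms $w\otimes\be$ and $\be\otimes w$. Since every genuinely non-trivial datum is already contained in Lemma \ref{lem:coideal-N}, no new combinatorial analysis of the deconcatenation is needed; the argument is purely formal.

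Concretely, let $w\in N$ be a non-singular word. By the very definition of the reduced coproduct one has
\begin{equation*}
\Delta(w)=w\otimes\be+\be\otimes w+\widetilde\Delta(w).
\end{equation*}
Lemma \ref{lem:coideal-N} supplies $\widetilde\Delta(w)\in N\otimes\cH$, so this summand already lies in $N\otimes\cH$. Next, since $w$ itself belongs to $N$, the term $w\otimes\be$ lies in $N\otimes\cH$, while $\be\otimes w$ lies in $\cH\otimes N$. Combining these three inclusions yields
\begin{equation*}
\Delta(w)\in N\otimes\cH+\cH\otimes N,
\end{equation*}
which is the sought-after two-sided coideal property on generators, and therefore on all of $N$ by linearity.

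It remains to verify the compatibility with the counit. Because the grading $N=\bigoplus_{l\ge 1}N_l$ restricts $N$ to words of length at least one, and $\varepsilon$ vanishes on every non-empty word of $\cH$, one has $\varepsilon(N)=\{0\}$. Together with the coproduct inclusion above, this shows that $N$ is a two-sided coideal of $(\cH,\Delta,\varepsilon)$.

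I do not anticipate any real obstacle here: the statement is a direct bookkeeping consequence of the lemma, and the only potential subtlety, namely ensuring that $\be\notin N$ so that $\varepsilon(N)=0$, is already built into the length grading of $N$.
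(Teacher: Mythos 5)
Your argument is correct and is precisely the (omitted) reasoning behind the paper's corollary: the paper states it without proof as an immediate consequence of Lemma \ref{lem:coideal-N}, and your decomposition $\Delta(w)=w\otimes\be+\be\otimes w+\widetilde\Delta(w)$ together with the observation that $N$ contains only words of length at least one (so $\varepsilon(N)=\{0\}$) is exactly the intended bookkeeping.
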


A well-defined {\it{renormalised MZV-character}} must be both compatible with the quasi-shuffle product as well as the meromorphic continuation of MZVs (whenever the latter is defined).

\begin{definition}[Renormalised MZVs]\label{def:renMZVs}
The set of all possible renormalisations of MZVs is defined by
\begin{align}
\label{renMZVs}
 X_{\bC,\zeta^\ast}:= \big\{ \alpha \in G_{\bC}\colon \alpha\restr{N}= \zeta^\ast \big\}.
\end{align}
\end{definition}

\begin{proposition}\label{prop:non-empty}
The set $X_{\bC,\zeta^\ast}$ is not empty.
\end{proposition}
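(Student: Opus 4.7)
The plan is to construct, explicitly or by citation, a single element of $X_{\bC,\zeta^\ast}$. I would exploit the fact that, by Hoffman's exponential isomorphism $\exp_H\colon (\cH,\sh)\to(\cH,\ast)$ already introduced, the quasi-shuffle algebra $(\cH,\ast)$ is free as a commutative algebra (it inherits from the shuffle side the structure of a polynomial algebra on Lyndon words of the alphabet $Y$). Consequently, a character $\alpha\in G_\bC$ is uniquely determined by its values on any chosen set of algebra generators, and any assignment of complex numbers to those generators extends to a character.

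Concretely, I would fix a set of free commutative generators $\cG\subset\cH$ (for example the Lyndon words in $Y$, or their images under $\exp_H$) and proceed by induction on length. On each generator $g\in\cG$ that happens to lie in $N$, the required value $\alpha(g)=\zeta^\ast(g)$ is forced; on generators $g\notin N$, the value can be chosen arbitrarily (e.g.\ $\alpha(g):=0$). Extending multiplicatively yields a character $\alpha$, and it remains to verify the identity $\alpha(w)=\zeta^\ast(w)$ for every non-singular word $w$. Expanding $w$ as a polynomial in the generators with respect to $\ast$ converts this into a quasi-shuffle identity among the values $\zeta(k_1,\ldots,k_j)$ at non-singular integer points, which holds by meromorphic continuation of the convergent quasi-shuffle relations (both sides are meromorphic functions that agree on a non-empty open set and share a non-singular evaluation point).

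The main obstacle is the compatibility check: the constraints imposed by non-singular, non-Lyndon words may partially determine the freely chosen values on certain singular generators (e.g.\ the identity forced by $z_a z_1\in N$ for $a\neq 1$ ties $\alpha(z_1 z_a)$ to the free parameter $\alpha(z_1)$). One has to verify that these forced assignments are mutually consistent and compatible with the multiplicative extension. A shorter alternative, which avoids this combinatorial bookkeeping altogether, is to invoke any of the explicit renormalisation schemes of \cite{Guo08,Ebrahimi15b,Manchon10}: each uses a regularisation together with the Connes--Kreimer Birkhoff decomposition recalled in Section \ref{sec:mero} to produce a $\bC$-valued character of $(\cH,\ast)$ that coincides with $\zeta^\ast$ on $N$, thereby exhibiting a concrete witness for $X_{\bC,\zeta^\ast}\neq\emptyset$.
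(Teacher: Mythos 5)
Your fallback is exactly the paper's proof: the authors simply observe that the renormalised MZV-character of \cite[Theorem 8]{Manchon10} lies in $X_{\bC,\zeta^\ast}$, and say nothing more. So the citation route is correct, with one caveat you should fix: it is \emph{not} true that any of the three schemes will do. Only the construction of \cite{Manchon10} regularises letters $z_l$ for all $l\in\bZ$ and hence produces a character on the full mixed-sign Hopf algebra $\cH=\bQ\langle Y\rangle$; the schemes of \cite{Guo08} and \cite{Ebrahimi15b} are defined only on the subalgebras $\fH$ and $\fH^-$ of non-positive, respectively negative, letters, so they exhibit elements of $X^{-}_{\bC,\zeta^\ast}$ (after restriction) but not of $X_{\bC,\zeta^\ast}$. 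The paper itself flags this when it calls \cite{Manchon10} ``the most general approach'' and the ``key example'' of an element of $X_{\bC,\zeta^\ast}$.

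Your primary, constructive route is not a proof as it stands, and you have correctly identified where it breaks: choosing values freely on singular generators and extending multiplicatively does not obviously reproduce $\zeta^\ast$ on all of $N$, because non-singular non-generator words impose constraints that mix forced values with the free parameters, and the consistency of these constraints is precisely the nontrivial content of the proposition. Moreover, the ``forced'' values on generators lying in $N$ are themselves not innocent: a non-singular Lyndon word can be expressible, via the quasi-shuffle relations, in terms of other words some of which are singular, so the constraint system is genuinely entangled rather than triangular over a generating set. Settling it would amount to redoing the analytic work of the regularisation--Birkhoff construction (one must know that the quasi-shuffle relations survive meromorphic continuation at every non-singular integer point, which requires an argument about which continuation one uses and how it interacts with the poles). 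Since the paper's one-line citation already discharges the proposition, I would drop the constructive sketch or present it only as motivation, and state the citation to \cite{Manchon10} alone as the proof.
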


\begin{proof}
Indeed, the renormalised MZV-character of \cite[Theorem 8]{Manchon10} lies in $X_{\bC,\zeta^\ast}$.
\end{proof}

Further, we will show below that $X_{\bC,\zeta^\ast}$ in fact contains infinitely many elements. Following Theorem \ref{theo:groupaction} the {\it{renormalisation group}}
\begin{align*}
 T_{\bC}:= \big\{\phi \in G_{\bC} \colon \phi\restr{N}=0 \big\}
\end{align*}
acts transitively and freely from the left on the set $X_{\bC,\zeta^\ast}$ of renormalised MZV-characters. Let $N^-:= N \cap \fH^-$ and let $G_\bC^-$ be the set of unital algebra morphisms from $ \fH^- \to \bC$. Then the set
\begin{align}
\label{renMZVs2}
	X^{-}_{\bC,\zeta^\ast} := \big\{ \alpha \in G^-_{\bC}\colon \alpha\restr{N^-}= \zeta^\ast \big\}
\end{align}
contains, for instance, the renormalised MZV-characters constructed in \cite{Ebrahimi15b}, but also the renormalised MZV-characters of \cite{Guo08,Manchon10} when appropriately restricted to $\fH^-$. Its renormalisation group $T^-_{\bC}:= \big\{\phi \in G^-_{\bC} \colon \phi\restr{N^-}=0 \big\}$ acts transitively and freely on $X^{-}_{\bC,\zeta^\ast}$.


\subsection{Comparison of different renormalisations of MZVs}
\label{ssect:models}

The central aim of our work is to understand the relation between different solutions of the renormalisation problem for MZVs, including those given in \cite{Ebrahimi15b,Guo08,Manchon10}. Recall that the set $X_{\bC,\zeta^\ast}$ comprises all solutions to this problem, and contains -- among others -- the renormalised MZV-character given in \cite{Manchon10}. It is also clear that the set $X^{-}_{\bC,\zeta^\ast}$ contains the solutions in \cite{Ebrahimi15b} as well as appropriate restrictions to $\fH^-$ of solutions presented in \cite{Guo08,Manchon10}. The relation between the aforementioned renormalised MZV-characters is immediately provided by the transitive and free left action of the renormalisation group $T^-_{\bC}$, that is, they all lie on a single orbit.\\

To see the origin of the different values, we briefly recall the essential steps leading to the solutions of the renormalisation problem for MZVs in \cite{Ebrahimi15b,Guo08,Manchon10}. All three approaches apply a key theorem from \cite{Connes00} in the context of Hoffman's quasi-shuffle Hopf algebra.

\begin{theorem}[\cite{Connes00}]
\label{theo:ConKre}
Let $\cH$ be a graded or filtered Hopf algebra over a ground field $k$, and  let $\cA$ a commutative unital $k$-algebra equipped with a renormalisation scheme $\cA=\cA_{-}\oplus \cA_{+}$ and the corresponding idempotent Rota--Baxter operator $\pi$, where $\cA_{-}=\pi(\cA)$ and $\cA_{+}=(\Id-\pi)(\cA)$. The character $\psi\colon \cH \to \cA$ admits a unique {\it{Birkhoff}} decomposition
  \begin{align}
   \label{eq:birk}
   \psi_{-}\star\psi= \psi_{+},
  \end{align}
where $\psi_{-}\colon \cH \to k\be \oplus \cA_{-}$ and $\psi_{+}\colon \cH \to \cA_{+}$ are characters.
\end{theorem}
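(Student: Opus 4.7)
The plan is to construct $\psi_-$ and $\psi_+$ recursively along the filtration (or grading) of $\cH$ by Bogoliubov-type formulas and then to verify separately that (a) they take values in the prescribed subspaces, (b) they are algebra morphisms, and (c) the decomposition is unique. Throughout, let $\widetilde{\Delta}(x)=\sum_{(x)}x'\otimes x''$ denote the reduced coproduct.

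First I would set $\psi_\pm(\be)=\be_{\cA}$ and, for $x\in\ker(\varepsilon)$ of positive degree, define
\begin{align*}
\psi_-(x) &:= -\pi\Bigl(\psi(x) + \sum_{(x)}\psi_-(x')\,\psi(x'')\Bigr),\\
\psi_+(x) &:= (\Id-\pi)\Bigl(\psi(x) + \sum_{(x)}\psi_-(x')\,\psi(x'')\Bigr).
\end{align*}
Since $x',x''$ each have strictly lower degree than $x$ in the filtration, this is a valid recursion. By construction $\psi_-(x)\in\cA_-$ and $\psi_+(x)\in\cA_+$, and one immediately reads off $\psi_+=\psi_-\star\psi$, which gives existence of the decomposition \eqref{eq:birk}.

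The main work is to show that $\psi_-$ and $\psi_+$ are characters. This is where the Rota--Baxter property of $\pi$ (equivalently, the fact that both $\cA_-$ and $\cA_+$ are subalgebras, since $\pi$ is an idempotent splitting of $\cA$) is essential. I would argue by induction on the degree of $xy$. Using that $\psi$ is a character and that $\Delta$ is multiplicative, one computes
\begin{align*}
\psi(xy)+\sum_{(xy)}\psi_-((xy)')\,\psi((xy)'')
\end{align*}
and rearranges it, via the induction hypothesis applied to the strictly lower-degree terms arising from $\widetilde{\Delta}(x)$ and $\widetilde{\Delta}(y)$, into an expression of the form $\pi(a)b+a\pi(b)+ab$ with $a=-\psi_-(x)/(\text{sign})$ and $b$ built analogously from $y$. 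The Rota--Baxter identity
\begin{align*}
\pi(a)\pi(b)=\pi\bigl(\pi(a)b+a\pi(b)\bigr)-\pi(ab)
\end{align*}
(which in our idempotent splitting is equivalent to $\cA_-$ being a subalgebra) then collapses the expression to $\psi_-(x)\psi_-(y)$. The analogous computation with $\Id-\pi$, using that $\cA_+$ is a subalgebra, yields $\psi_+(xy)=\psi_+(x)\psi_+(y)$. This inductive multiplicativity check is the main obstacle: getting the Sweedler bookkeeping right and correctly identifying which terms are absorbed by the Rota--Baxter identity requires care, but once the reduced coproduct is used it is routine.

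Finally, for uniqueness, suppose $\psi_-\star\psi=\psi_+$ and $\tilde\psi_-\star\psi=\tilde\psi_+$ are two such decompositions. Then $\tilde\psi_-\star\psi_-^{-1}=\tilde\psi_+\star\psi_+^{-1}$ is a character taking values simultaneously in $k\be\oplus\cA_-$ and in $k\be\oplus\cA_+$ (the unit element is needed to accommodate $\varepsilon$). Since $\cA_-\cap\cA_+=\{0\}$, this character must equal $e=u_{\cA}\circ\varepsilon$, whence $\tilde\psi_\pm=\psi_\pm$. This finishes the proof; the reader is referred to \cite{Connes00,Manchon08} for the original arguments.
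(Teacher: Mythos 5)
The paper does not actually prove this statement: it is quoted directly from Connes--Kreimer \cite{Connes00}, with \cite{Manchon08} cited for details, so there is no internal proof to compare against. Your sketch reproduces exactly the standard argument from those references --- existence via the Bogoliubov-type recursion for $\psi_{\pm}$ along the filtration, multiplicativity of both factors via the weight $-1$ Rota--Baxter identity for the idempotent splitting $\cA=\cA_{-}\oplus\cA_{+}$, and uniqueness from $\cA_{-}\cap\cA_{+}=\{0\}$ applied to the character $\tilde\psi_{-}\star\psi_{-}^{-1}=\tilde\psi_{+}\star\psi_{+}^{-1}$ --- and it is correct in outline, with the only deliberately elided portion being the inductive Sweedler bookkeeping in the multiplicativity check.
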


In the context of MZVs the starting point is the $\mathbb{C}$-valued MZV-character \eqref{eq:zetaast}, which a priori is well-defined on the subalgebra $\fh^+$ of convergent words. It is then extended to $N$ by meromorphic continuation, which makes it a partially defined character. The critical step in this approach is the method of regularising the partially defined  MZV-character in such a way that the resulting map is compatible with the quasi-shuffle product. This then defines a new algebra morphism on all of $\cH$ with values in a commutative unital Rota--Baxter algebra. In \cite{Ebrahimi15b,Guo08,Manchon10} these new characters are constructed by introducing a so-called {\it{regularisation parameter}} $\lambda$ such that the regularised MZV-character $\zeta^*_\lambda$ maps $\cH$ into the Laurent series $\cA:=\mathbb{C}[\lambda^{-1},\lambda]\!]$. The latter is a commutative Rota--Baxter algebra, where $\cA_{-}:=\lambda^{-1}\bC[\lambda^{-1}]$ and $\cA_{+}:=\bC[\![\lambda]\!]$. On $\cA$ the corresponding projector $\pi: \cA \to \cA_{-}$ is defined by {\it{minimal subtraction}}
\begin{align*}
 \pi\left(\sum_{n=-l}^\infty a_n \lambda^n \right):= \sum_{n=-l}^{-1}a_n \lambda^n
\end{align*}
with the common convention that the sum over the empty set is zero. Note that applying \eqref{eq:birk} in this setting, the factor $\zeta^*_{\lambda,+}$ in the Birkhoff decomposition of $\zeta^*_\lambda = \zeta^{* -1}_{\lambda,-} \star \zeta^*_{\lambda,+}$  is well-defined when putting the regularisation parameter $\lambda$ to zero, and provides a solution to the renormalisation problem for MZVs. In the case of convergent words, the basic building block is the map that associates with each letter $z_l$ the function $f_l:x\mapsto x^{-l}$ for $x\in[1,+\infty)$. A {\it{regularisation process}} consists of a map $\cR$, that deforms $f_l(x)$ appropriately. The three regularised characters in \cite{Ebrahimi15b,Guo08,Manchon10} are given by deforming this map as follows

\begin{itemize}
 \item \cite{Guo08}: $z_{l} \mapsto f_{\lambda,l}(x):=\frac{\exp(-l x \lambda)}{x^l}$

 \item \cite{Ebrahimi15b}: $z_l \mapsto f_{\lambda,l}(x):= \frac{q^{|l| x}}{(1-q^x)^l}$, $\lambda = \log(q)$

 \item \cite{Manchon10}: $z_l \mapsto f_{\lambda,l}(x):=\frac{1}{x^{l-\lambda}}$

\end{itemize}

\noindent The nested sum \eqref{mzv} changes accordingly in each case
\begin{equation}
\label{mzv-reg}
 \zeta_\lambda (k_1,\ldots,k_n):=\sum_{m_1>\cdots > m_n>0} f_{\lambda,k_1}(m_1) \cdots f_{\lambda,k_n} (m_n) \in \cA.
\end{equation}
The finite characters are then defined in terms of the following $\mathbb{C}$-valued maps
\begin{align*}
	\zeta^+_{\textrm{\tiny{GZ}}}\restr{\lambda=0}= \zeta_{\textrm{\tiny{GZ}}},
	\hspace{1.5cm}
	\zeta^+_{\textrm{\tiny{EMS}}}\restr{\lambda=0}=\zeta_{\textrm{\tiny{EMS}}},
	\hspace{1.5cm}
	\zeta^+_{\textrm{\tiny{MP}}}\restr{\lambda=0}=\zeta_{\textrm{\tiny{MP}}},
\end{align*}
such that for words from $Y_-$, $\widehat{Y}_-$, and $Y$, respectively, the resulting maps are compatible with the meromorphic continuation. We remark that the most general approach is that presented in \cite{Manchon10} which provides us with the key example of an element in $X_{\bC,\zeta^\ast}$.

\section{The renormalisation group of MZVs in the mixed-sign case}
\label{sect:RGmixed-sign}

In this section we show that the renormalisation group $T_\bC$ is a pro-unipotent group with infinite-dimensional Lie algebra. Since  $X_{\bC,\zeta^\ast}$ is not empty  (due to \cite[Theorem 8]{Manchon10}) and the group action is transitive and free, the choice of any particular element in $X_{\bC,\zeta^\ast}$ yields a bijection from the renormalisation group onto the latter. In particular, $X_{\bC,\zeta^\ast}$ is of infinite uncountable cardinality.

\begin{proposition}
We have:
\begin{enumerate}[a)]
 \item The two-sided ideal $\cN$ generated by $N$ is a Hopf ideal of $\cH$.
 \item For any commutative unital algebra $\cA$, the renormalisation group $T_{\cA}$ is isomorphic to the group of characters of the Hopf algebra $\cH/\cN$.
\end{enumerate}
\end{proposition}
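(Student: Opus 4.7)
For part (a), my plan is to verify the three conditions characterising a Hopf ideal: that $\cN$ is a coideal, that $\varepsilon$ vanishes on $\cN$, and that $S(\cN)\subseteq \cN$. The corollary just above the statement already shows that $N$ itself is a two-sided coideal, so $\Delta(N) \subseteq N \otimes \cH + \cH \otimes N$. Since $\Delta$ is an algebra morphism with respect to the quasi-shuffle product $\ast$ and $\cN = \cH \ast N \ast \cH$, applying $\Delta$ factorwise transports this containment to the whole ideal, yielding $\Delta(\cN) \subseteq \cN \otimes \cH + \cH \otimes \cN$. The vanishing of $\varepsilon$ on $\cN$ reduces, by multiplicativity, to its vanishing on the generating subspace $N$, which holds by definition.

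The potentially delicate step is stability under the antipode. My plan is first to show $S(N) \subseteq \cN$ using the recursive formula \eqref{eq:antipode}: for any $x \in N$, Lemma \ref{lem:coideal-N} gives $\widetilde{\Delta}(x) = \sum_{(x)} x' \otimes x''$ with each $x' \in N$, so every term $x' \ast S(x'')$ lies in the ideal $\cN$, and adding $-x \in N$ gives $S(x) \in \cN$. Because $\cH$ is commutative, $S$ is simultaneously an algebra anti-morphism and an algebra morphism, so the extension to the whole ideal is immediate: $S(\cH \ast N \ast \cH) = S(\cH) \ast S(N) \ast S(\cH) \subseteq \cH \ast \cN \ast \cH = \cN$. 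This is the one step that is not pure universal-property bookkeeping, and is the main point to handle carefully.

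For part (b), my plan is to exploit the universal property of the Hopf algebra quotient $\cH/\cN$, which by part (a) is a well-defined commutative Hopf algebra whose $\cA$-valued characters form a group under convolution. The canonical projection $\pi\colon \cH \to \cH/\cN$ is by construction a morphism of Hopf algebras, and composition with $\pi$ sets up a bijection between unital algebra morphisms $\bar{\phi}\colon \cH/\cN \to \cA$ and unital algebra morphisms $\phi\colon \cH \to \cA$ that vanish on $\cN$. A unital algebra morphism vanishes on the two-sided ideal $\cN$ if and only if it vanishes on its generating subspace $N$, so the latter condition is equivalent to $\phi\restr{N} = 0$, which singles out exactly the subgroup $T_\cA$. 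Finally, the bijection respects convolution: the identity $(\pi \otimes \pi)\circ \Delta = \Delta_{\cH/\cN} \circ \pi$ gives $(\bar{\phi}_1 \circ \pi) \star (\bar{\phi}_2 \circ \pi) = (\bar{\phi}_1 \star \bar{\phi}_2) \circ \pi$, so the bijection is a group isomorphism.
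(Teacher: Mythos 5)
Your proof is correct and follows essentially the same route as the paper, which simply declares part a) ``clear'' from the two-sided coideal property of $N$ and establishes part b) via the bijection between characters of $\cH/\cN$ and characters of $\cH$ vanishing on $\cN$. Your additional verification of antipode stability via the recursion \eqref{eq:antipode} (or, equivalently, via connectedness of the quotient bialgebra) is a detail the paper leaves implicit, and it is handled correctly.
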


\begin{proof}
Since $N$ is a two-sided coideal for the deconcatenation coproduct $\Delta$, the claim in $a$) is clear. To prove $b$), we consider  the map
\begin{align*}
 \Phi\colon T_{\cA} \to \operatorname{Char}(\cH/\cN), \hspace{0.5cm}\psi \mapsto \big(\Phi(\psi):[x]\mapsto \psi(x)\big).
\end{align*}
The map $\Phi$ is well defined and it is an algebra morphism. Furthermore $\Phi$ is surjective since any character $\psi$ of $\cH/\cN$ gives rise to a character $\widetilde{\psi}:=\psi \circ \pr$ of $\cH$ using the canonical projection $\pr \colon \cH\to \cH/\cN$, $x\mapsto [x]$, which vanishes on $\cN$. Injectivity of $\Phi$ is clear.
\end{proof}

\begin{theorem}\label{theo:group}
 The renormalisation group $T_{\cA}$ is pro-unipotent and can be identified with the space
 $\cL(W,\cA)$ of linear maps from $W$ to $\cA$, where $W$ is some vector subspace of $\cH/\cN$.
\end{theorem}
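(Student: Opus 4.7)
The plan is to combine the preceding proposition — which identifies $T_\cA$ with $\operatorname{Char}(\cH/\cN,\cA)$ — with the classical theory of pro-unipotent character groups for commutative connected filtered Hopf algebras. I would proceed in three steps.

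First, I would verify that $\cH/\cN$ is a commutative connected filtered Hopf algebra. Commutativity follows from commutativity of the quasi-shuffle product $\ast$ on $\cH$. Since non-singular words are non-empty, one has $N \subseteq \ker(\varepsilon)$ and hence $\cN \subseteq \ker(\varepsilon)$, which gives the connectedness $\cN \cap \bQ\be = \{0\}$. The length filtration $\cH = \bigcup_n \cH_{\le n}$ descends to $\cH/\cN$ because $N = \bigoplus_{l \ge 1} N_l$ is graded by length, so that $\cN$ is a filtered Hopf ideal. Conilpotency of the reduced coproduct on $\cH/\cN$ is inherited from that on $\cH$.

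Second, I would invoke the standard structure theorem (see e.g.\ \cite{Manchon08}): for a commutative connected filtered Hopf algebra $\cK$ over a field of characteristic zero and a commutative unital algebra $\cA$, the convolution exponential
\begin{equation*}
\exp_\star(\delta) := \sum_{k \geq 0} \frac{\delta^{\star k}}{k!}
\end{equation*}
is well defined pointwise — each evaluation reduces to a finite sum thanks to conilpotency — and restricts to a bijection from the Lie algebra $g_\cA$ of infinitesimal $\cA$-valued characters of $\cK$ onto $\operatorname{Char}(\cK,\cA)$. Regarded as the projective limit of the character groups of the finite-length truncations, this character group is pro-unipotent. Applying this with $\cK = \cH/\cN$ and invoking the previous proposition, $T_\cA$ is pro-unipotent.

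Third, I would produce the concrete identification with $\cL(W,\cA)$. Let $\bar\varepsilon$ denote the counit induced on the quotient, and choose a linear complement $W$ of $(\ker\bar\varepsilon)^2$ inside $\ker\bar\varepsilon \subset \cH/\cN$. An infinitesimal character vanishes on $\bQ\be$ and on $(\ker\bar\varepsilon)^2$, hence is uniquely determined by its restriction to $W$; conversely, any linear map $W \to \cA$ extends by zero on $(\ker\bar\varepsilon)^2$ and by $\bar\varepsilon$ on $\bQ\be$ to a unique infinitesimal character. Composing the resulting bijection $g_\cA \cong \cL(W,\cA)$ with $\exp_\star$ delivers the desired identification $T_\cA \cong \cL(W,\cA)$.

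The main technical point, modest as it is, is bookkeeping: verifying that the length filtration on $\cH$ really descends to a filtration on $\cH/\cN$ making it connected and conilpotent. This rests on $N$ being length-graded (immediate from Definition~\ref{def:non-singular}) and on $\cN$ being contained in $\ker(\varepsilon)$ (immediate since non-singular words are non-empty). Once these observations are secured, the identification with $\cL(W,\cA)$ is standard and requires no further input beyond the choice of a linear complement $W$.
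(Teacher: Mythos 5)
Your argument is correct, but it reaches the conclusion by a genuinely different route than the paper. The paper never passes through infinitesimal characters: it combines conilpotency of the deconcatenation coproduct with the Leray--Quillen structure theorem, realised via the Eulerian idempotent $\widetilde{\pi}_1=\log^{\widetilde{\star}}\operatorname{Id}$, to identify $\cH/\cN$ with the free commutative algebra on $W=\widetilde{\pi}_1(\cH/\cN)$; characters with values in a commutative $\cA$ are then exactly the multiplicative extensions of linear maps $W\to\cA$, and the presentation $W=\pi_1(\cH)/\pi_1(\cN)$ is established along the way. You instead check that $\cH/\cN$ is a commutative connected filtered Hopf algebra, use the bijection $\exp_\star\colon\mathfrak{t}_{\cA}\to T_{\cA}$, and identify $\mathfrak{t}_{\cA}$ with $\cL(W,\cA)$ for $W$ an arbitrary linear complement of the square of the augmentation ideal; your converse step does hold, since for $\delta$ vanishing on $\bQ\be$ and on that square one computes $\delta(x\ast y)=\varepsilon(x)\delta(y)+\varepsilon(y)\delta(x)$ after splitting off counit components. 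Your version is more elementary, requiring only the standard $\exp_\star$/$\log_\star$ machinery rather than the structure theorem (and note that the quotient filtration on $\cH/\cN$ exists automatically for any Hopf ideal, so the length-gradedness of $N$ is not really needed there). What the paper's version buys is a \emph{canonical} $W$, namely the image of the Eulerian idempotent, together with the explicit description $W=\pi_1(\cH)/\pi_1(\cN)$; this is precisely what the subsequent theorem and proposition exploit to compute bases of the graded pieces $W'_1$, $W'_2$, $W'_3$, so the heavier machinery is not gratuitous. The two choices of $W$ are canonically isomorphic in any case, both being complements of the decomposables and hence realisations of $\ker\varepsilon/(\ker\varepsilon)^2$ inside $\cH/\cN$.
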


\begin{proof}
 The deconcatenation coproduct \eqref{deconcat} is conilpotent, i.e., for any word $w$ we have $\widetilde\Delta^k(w)=0$ for sufficiently large $k$, where $\widetilde \Delta^k:=(\Id \otimes \widetilde \Delta^{k-1})\circ \widetilde \Delta$ is the $k$-th iterated reduced coproduct. The same property holds for the reduced coproduct of $\cH/\cN$. Hence, by a theorem of D.~Quillen \cite{Quillen69} (see \cite[Theorem 3.9.1]{Cartier07}) the Hopf algebra $\cH$, respectively $\cH/\cN$, is isomorphic to the free commutative algebra on the vector subspace $\overline W:=\pi_1(\cH)$, respectively $W= \widetilde{\pi}_1 (\cH/\cN)$, where $\pi_1:=\log^\star \hbox{Id}$ denotes the Eulerian idempotent with respect to the convolution product of the Hopf algebra $\cH$ and  $\widetilde{\pi}_1:=\log^{\widetilde{\star}} \hbox{Id}$ is defined with respect to the convolution product of the quotient Hopf algebra $\cH/\cN$ (\cite{Patras91}, see also \cite{Loday94}). By definition the following diagram commutes:
\vskip 2mm
\centerline{
\xymatrix{
{\cH} \ar[r]^{\pi_1}\ar@{->>}^{\pr}[d] & \overline W\ar@{->>}^{\pr\srestr{\overline W}}[d]\\
{\cH}/{\cN} \ar[r]^{\widetilde{\pi}_1} & W
}
}
\vskip 2mm
We have $W=\overline W/ \pi_1(\cN)$. Indeed, for any $x\in \cH$ we have that $\pr(\pi_1(x)) = 0$ implies $\pi_1(x)\in \cN$ and therefore $\pi_1(x)\in \pi_1(\cN)$ since $\pi_1$ is a projector. Hence, $\ker(\pr\restr{\overline W})=\pi_1(\cN)$ and the claim follows from the fundamental theorem on homomorphisms. Now any $\cA$-valued character of $\cH/\cN$ is uniquely determined, through multiplicative extension, by a linear map from $W$ to $\cA$.
\end{proof}

The pro-nilpotent Lie algebra $\mathfrak t_{\cA}$ of the pro-unipotent renormalisation group $T_{\cA}$ is the space of $\cA$-valued infinitesimal characters of $\cH/\cN$, which identifies itself with $\cL(W,\cA)$.

\begin{theorem}
The Lie algebra $\mathfrak t_{\cA}$ of the renormalisation group is infinite-dimensional.
\end{theorem}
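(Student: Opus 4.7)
The plan is to leverage the identification $\mathfrak{t}_\cA \cong \cL(W,\cA)$ set up in the proof of Theorem \ref{theo:group}: infinite-dimensionality of $\mathfrak{t}_\cA$ reduces to showing $\dim W = \infty$, where $W = \overline W/\pi_1(\cN)$ and $\overline W = \pi_1(\cH)$. The main tool is the natural weight grading $\cH = \bigoplus_{w\in\bZ}\cH^{(w)}$, with $\cH^{(w)}$ spanned by words $z_{k_1}\cdots z_{k_n}$ satisfying $\sum_i k_i = w$. Because the contraction rule $z_az_b\mapsto z_{a+b}$ preserves weight, the product $\ast$ is weight-graded; deconcatenation respects weight as well; hence $N$, $\cN$ and the Eulerian idempotent $\pi_1 = \log^\star\Id$ all preserve this grading. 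Since $\pi_1$ annihilates the decomposables $\cH^+\ast\cH^+$ of the commutative Hopf algebra $\cH$, one has $\pi_1(\cN) = \pi_1(N\ast\cH) = \pi_1(N)$, so the grading descends to $W = \bigoplus_w \overline W^{(w)}/\pi_1(N^{(w)})$; it suffices to exhibit a single weight in which $W^{(w)}$ is infinite-dimensional.

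I would focus on $w=0$. Direct inspection of Definition \ref{def:non-singular} shows that $z_0$ is the \emph{only} non-singular word of weight $0$: for a word of length $n\ge 2$ with total weight $0$, the singularity condition at $j=n$ holds (since $0\in\bZ_{\le n}$ for every $n\ge 3$), while for $n=2$ condition \emph{(ii)} holds (since $0\in\{2,1,0,-2,-4,\ldots\}$). Therefore $N^{(0)} = \bQ z_0$, and because $z_0$ is primitive for deconcatenation we obtain $\pi_1(N^{(0)}) = \bQ z_0$; in particular $W^{(0)} = \overline W^{(0)}/\bQ z_0$.

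To produce infinitely many linearly independent classes in $W^{(0)}$, consider for each $n\ge 1$ the length-two weight-zero words $z_{-n}z_n$ and $z_nz_{-n}$. Using $\widetilde\Delta(z_{-n}z_n) = z_{-n}\otimes z_n$ together with $z_{-n}\ast z_n = z_{-n}z_n + z_nz_{-n} + z_0$, a short calculation with $\pi_1 = \log^\star\Id$---all higher iterates of $\Id-e$ vanishing on these length-two inputs---gives
\[
\pi_1(z_{-n}z_n) - \pi_1(z_nz_{-n}) = z_{-n}z_n - z_nz_{-n},
\]
so the element $x_n := z_{-n}z_n - z_nz_{-n}$ lies in $\overline W^{(0)}$. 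The $x_n$ are linearly independent in $\cH$ because they involve pairwise disjoint pairs of length-two basis words, and they remain linearly independent modulo $\pi_1(N^{(0)}) = \bQ z_0$ since each $x_n$ is concentrated in length two whereas $z_0$ has length one. Hence $\dim W \ge \dim W^{(0)} = \infty$, as required.

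The only non-routine step is the identity $\pi_1(\cN) = \pi_1(N)$, which reflects the standard fact (already used implicitly in the proof of Theorem \ref{theo:group}) that $\pi_1$ projects onto the space of indecomposables of the commutative Hopf algebra $\cH$ and therefore annihilates $\cH^+\ast\cH^+$. Everything else is a direct inspection of the singularity conditions in weight zero plus the small Eulerian-idempotent computation; the conceptual content is simply that weight $0$ is the most restrictive weight for $N$, so the abundant supply of length-two ``commutators'' $z_{-n}z_n - z_nz_{-n}$ survives in the quotient.
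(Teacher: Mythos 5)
Your proof is correct, and it reaches the conclusion by a genuinely different and more economical route than the paper. The paper transports the problem to the shuffle Hopf algebra via Hoffman's logarithm, invokes the free Lie algebra $\operatorname{Lie}(Y)$ and Solomon's descent formula for the Eulerian idempotent, and then derives infinite-dimensionality from an explicit description of the length-graded pieces $W'_1,W'_2,W'_3$ of $W'=\overline{W'}/\pi_1(N)$ in the subsequent Proposition (already $W'_2$, indexed by singular pairs $(a,b)\in\sfS_2$ with $a>b$, is infinite-dimensional). You instead stay inside the quasi-shuffle Hopf algebra and exploit the weight grading, which the paper never uses: since the quasi-shuffle product, deconcatenation, and hence $\pi_1$ all preserve weight, and since $N^{(0)}=\bQ z_0$ by direct inspection of Definition \ref{def:non-singular}, the quotient in weight zero is only by $\bQ z_0$, and the explicit elements $x_n=\pi_1(z_{-n}z_n)-\pi_1(z_nz_{-n})=z_{-n}z_n-z_nz_{-n}$ form an infinite independent family there. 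These witnesses are, up to Hoffman's isomorphism and a factor of $2$, the same commutators $[z_n,z_{-n}]$ attached to the singular line $a+b=0$ that populate the paper's $W'_2$; but your argument needs neither $\log_H$, nor Solomon's formula, nor the low-depth basis computations. Both arguments rest on the same non-routine fact, which you correctly isolate: the Eulerian idempotent of the commutative connected filtered Hopf algebra $\cH$ annihilates products of augmentation-ideal elements, so $\pi_1(\cN)=\pi_1(N)$ -- the paper uses this implicitly when it writes $W'=\overline{W'}/\pi_1(N)$ in \eqref{eq:Wprime}. What the paper's longer route buys is the explicit description of $W'$ in low depth (the content of the Proposition following the theorem), which your argument does not attempt; what yours buys is a short, self-contained proof of the bare infinite-dimensionality statement.
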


\begin{proof}
It will be convenient to use the shuffle instead of the quasi-shuffle product. Therefore let $\cH_\sh$ be the Hopf algebra $\bQ\langle Y\rangle$ endowed with the shuffle product $\shuffle$ introduced in Paragraph \ref{ssect:RGMZVs}. By Hoffman's logarithm $\log_H$ -- which sends any word to a suitable linear combination of shorter words obtained by contractions -- the Hopf algebras $\cH$ and $\cH_\sh$ are isomorphic. The Hopf algebra $\cH_\sh$ is the free commutative algebra over the free Lie algebra $\hbox{Lie}(Y)$ generated by the alphabet $Y$. Hence, $\hbox{Lie}(Y)$ is linearly isomorphic to $\overline{W'}=\pi_1(\cH_\sh)$, where $\pi_1$ now stands for the Eulerian idempotent of $\cH_\sh$.\\

The pairing $\langle - , - \rangle\colon \mathbb Q\langle Y\rangle \otimes \mathbb Q\langle Y\rangle \to \bQ$ defined by $\langle w,w'\rangle =\delta_w^{w'}$ for any $w,w'\in Y^*$ realises a non-degenerated Hopf pairing between $\cH_\sh=(\mathbb Q\langle Y\rangle,\sh,\Delta)$ and $\cH^\lor:=(\mathbb Q\langle Y\rangle,.,\Delta_\sh)$, where $\delta_w^{w'}$ is the Kronecker symbol. Here $.$ denotes the concatenation product and $\Delta_\sh$ is the cocommutative deshuffle coproduct, i.e., the only coproduct compatible with concatenation, and such that the letters are primitive. By looking at the two convolution products in $\hbox{End}(\cH_\sh)$ and $\hbox{End}(\cH)$, it is easily seen that the Eulerian idempotent of $\cH^\lor$ is the transpose $\pi_1^\lor$ of the Eulerian idempotent $\pi_1$ of $\cH_\sh$. The map $\pi_1^\lor$ takes values in the free Lie algebra $\operatorname{Lie}(Y)$, and we have
\begin{equation}
\pi_1^\lor(z_1\cdots z_n)=\sum_{\sigma\in S_n}c(\sigma)z_{\sigma(1)}\cdots z_{\sigma(n)},
\end{equation}
where $S_n$ is the symmetry group of degree $n$ and
the coefficients $c(\sigma)$ are explicitly given by
\begin{equation}
c(\sigma)=\frac{(-1)^{d(\sigma)}}{n}{n-1\choose d(\sigma)}^{-1},
\end{equation}
where $d(\sigma)\in\{0,\ldots, n-1\}$ is the number of descents of $\sigma$, i.e., the cardinality of $D(\sigma):=\big\{j\in \{1,\ldots,n\}\colon \sigma(j+1)<\sigma(j) \big\}$ \cite{Solomon68}. The duality formula $\langle \pi_1(w),w'\rangle = \langle w,\pi_1^\lor(w')\rangle$ immediately yields
\begin{equation}
\pi_1(z_1\cdots z_n)=\sum_{\sigma\in S_n}c(\sigma^{-1})z_{\sigma(1)}\cdots z_{\sigma(n)}.
\end{equation}
Both maps $\pi_1$ and $\pi_1^\lor$ coincide on words of lengths $1$, $2$ and $3$, due to the fact that $c(\sigma)=c(\sigma^{-1})$ for any $\sigma\in S_n$ for $n\in\{1,2,3\}$. However, note that this is no longer true in length $4$, and that $\pi_1(\mathcal H_\sh)$ is not included in $\hbox{Lie}(Y)$ (see Remark \ref{rem:lie}).


Our Hopf algebra $\cH/\cN$ is isomorphic to $\cH_\sh/\cN_{\sh}$, where $\cN_{\sh}=\log_H(\cN)$ is the ideal generated by $N_{\sh}=\log_H(N)$. Note that the second assertion of Lemma
\ref{lem:coideal-N} leads to $N_{\sh}=N$ since $\log_H(N) = N$.
Similar as in the proof of Theorem \ref{theo:group} we have $\cH_\sh/\cN_\sh=S(W')$ where
\begin{align}\label{eq:Wprime}
	W'=\overline{W'}/\pi_1(N).
\end{align}
Therefore the following diagram commutes
\vskip 2mm
\centerline{
\xymatrix{
{\cH_\sh} \ar[rr]^{\pi_1}\ar@{->>}[d] && \overline{W'}\ar@{->>}[d]\\
{\cH_\sh}/{\cN_\sh} \ar[rr]^{\widetilde{\pi}_1} && W'
}
}
\vskip 2mm
\noindent where $\widetilde{\pi}_1$ is the Eulerian idempotent of $\cH_\sh/\cN_\sh$.\\

The natural grading on $\cH_\sh$ by depth generates a grading on $W'$. The following proposition shows that $W'$ is an infinite dimensional vector spaces which  concludes the proof of the theorem.
\end{proof}

\begin{proposition}
We have:
 \begin{enumerate}
  \item [\upshape{(i)}] $W'_1$ is one-dimensional.
  \item [\upshape{(ii)}] A basis of $W'_2$ can be identified with
  \begin{align*}
   \{z_a z_b\colon  a>b, (a,b)\in \sfS_2\},
  \end{align*}
  \item [\upshape{(iii)}] Let $\sfN_3:=\bZ^3\setminus \sfS_3$ be the set of non-singular points for the triple zeta function.
A basis of $W'_3$ can be identified with $\myB:=\bigcup_{j=1}^4 \myB_j$, where
\begin{align*}
&  \myB_1:=\{z_a z_b z_c, z_a z_c z_b: a>b>c, (a,b,c)\in \sfS_3 \}, \\
&  \myB_2:=\{z_a z_c z_b: a>b>c, (a,b,c)\in \sfN_3, (a,c,b),(b,a,c)\in \sfS_3\}, \\
&  \myB_3:=\{z_a z_a z_b: a>b, (a,a,b)\in \sfS_3\},\\
&  \myB_4:=\{z_a z_b z_b: a>b, (a,b,b)\in \sfS_3\}.
\end{align*}
 \end{enumerate}
\end{proposition}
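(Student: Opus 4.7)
The plan is to compute each graded piece $W'_n=\overline{W'}_n/\pi_1(N)_n$ for $n=1,2,3$ directly using the explicit formula for the Eulerian idempotent at small lengths. Recall from the proof of the preceding theorem that on words of length $n\le 3$ the idempotents $\pi_1$ and $\pi_1^\lor$ coincide, so that
\[
\pi_1(z_{a_1}\cdots z_{a_n})=\sum_{\sigma\in S_n}c(\sigma)\,z_{a_{\sigma(1)}}\cdots z_{a_{\sigma(n)}},\qquad c(\sigma)=\frac{(-1)^{d(\sigma)}}{n}\binom{n-1}{d(\sigma)}^{-1}.
\]
For each $n$ one first identifies $\overline{W'}_n\subseteq\cH_\sh$ as the image of $\pi_1$ (a copy of $\operatorname{Lie}(Y)_n$ embedded in length-$n$ tensors), then characterises $\pi_1(N)_n$ using Definition \ref{def:non-singular}, and finally reads off a basis of the quotient.

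For (i), $\pi_1$ is the identity on letters and $\sfS_1=\{1\}$, so $\overline{W'}_1=\bigoplus_{k\in\bZ}\bQ z_k$ and $\pi_1(N)_1=\bigoplus_{k\ne 1}\bQ z_k$, giving a one-dimensional quotient spanned by the class of $z_1$. For (ii), the formula reduces to $\pi_1(z_az_b)=\tfrac12(z_az_b-z_bz_a)$, so $\overline{W'}_2$ has basis $\{\pi_1(z_az_b):a>b\}$; since $\pi_1(z_bz_a)=-\pi_1(z_az_b)$, a class survives in $W'_2$ precisely when both orderings $(a,b)$ and $(b,a)$ correspond to singular words. The claimed indexing set then follows by checking case-by-case, under $a>b$, that the condition $(a,b)\in\sfS_2$ aligns with this joint singularity (the sum-part of $\sfS_2$ being manifestly symmetric in its two entries).

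For (iii), the length-three expansion
\[
\pi_1(z_az_bz_c)=\tfrac13(z_az_bz_c-z_cz_bz_a)-\tfrac16(z_az_cz_b+z_bz_az_c)+\tfrac16(z_bz_cz_a+z_cz_az_b)
\]
shows that $\overline{W'}_3$ acquires, per triple $a>b>c$ of distinct letters, two independent Lie classes (a basis of $\operatorname{Lie}_3$ on three generators, equivalently two Lyndon representatives modulo the Jacobi identity), and a single Lie class for each repeated-letter triple $(a,a,b)$ or $(a,b,b)$. One then partitions the $S_3$-orbit of each triple according to which permutations are singular and which are non-singular: when every permutation is singular the full Lie contribution survives, yielding family $\myB_1$ in the distinct-letter case and $\myB_3,\myB_4$ in the repeated-letter cases; when exactly one mixed permutation becomes non-singular while the canonical $(a,b,c)$ and its transposition $(b,a,c)$ remain singular, the additional relation imposed by $\pi_1(N)_3$ collapses the two-dimensional slice to a one-dimensional class represented by $z_az_cz_b$, producing family $\myB_2$. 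Linear independence of $\myB$ then follows because different $S_3$-orbits of triples contribute to independent subspaces of $\cH_\sh/\cN_\sh$.

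The main obstacle is the length-three case, specifically the bookkeeping for family $\myB_2$: one must verify that when precisely the permutation $(a,b,c)\in\sfN_3$ is non-singular while $(a,c,b),(b,a,c)\in\sfS_3$, the Jacobi-type relations inside $\overline{W'}_3$ together with the kernel $\pi_1(N)_3$ leave a single surviving class represented canonically by $z_az_cz_b$. This requires a careful enumeration of all six orderings of each triple against the conditions defining $\sfS_3$ and $\sfN_3$; the repeated-letter families $\myB_3,\myB_4$ are then handled by the same method using the smaller stabiliser subgroup inside $S_3$.
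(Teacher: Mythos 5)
Your overall strategy is the same as the paper's: expand the Eulerian idempotent explicitly in lengths $1,2,3$, determine $\pi_1(N)$, and perform a case analysis on the $S_3$-orbit of each triple. The problem is that your displayed length-three formula for $\pi_1$ is incorrect, and the error is structural rather than cosmetic. Since $c(\sigma)=\tfrac13$ for $d(\sigma)\in\{0,2\}$ and $c(\sigma)=-\tfrac16$ for $d(\sigma)=1$, the correct expansion is
\[
\pi_1(z_az_bz_c)=\tfrac13\,\big(z_az_bz_c+z_cz_bz_a\big)-\tfrac16\,\big(z_az_cz_b+z_bz_az_c+z_bz_cz_a+z_cz_az_b\big),
\]
which is \emph{symmetric} under word reversal; your formula is antisymmetric and would give $\pi_1(z_cz_bz_a)=-\pi_1(z_az_bz_c)$. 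The entire depth-three argument rests on the identities $\pi_1(z_az_bz_c)=\pi_1(z_cz_bz_a)$ and $\pi_1(z_az_bz_c)+\pi_1(z_az_cz_b)+\pi_1(z_bz_az_c)=0$ (the paper's relations \eqref{equ:depth3Rels}), which cut the six images $\pi_1(\sigma(z_az_bz_c))$ down to at most two independent classes and yield the collapse relation $\pi_1(z_az_cz_b)=-\pi_1(z_az_bz_c)-\pi_1(z_bz_az_c)$ that produces $\myB_2$; neither identity follows from the formula you wrote. In addition, your description of the $\myB_2$ case in the body of the argument ("one mixed permutation becomes non-singular while $(a,b,c)$ and $(b,a,c)$ remain singular") is the reverse of the defining condition $(a,b,c)\in\sfN_3$, $(a,c,b),(b,a,c)\in\sfS_3$, which you only state correctly in your final paragraph; with the roles swapped the surviving class would be represented by $z_az_bz_c$, not $z_az_cz_b$. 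Finally, like the paper, you defer the combinatorial step showing that all remaining orbit configurations contribute nothing (the claim that if a singular triple becomes non-singular after one adjacent descending swap then the whole orbit lies in $\pi_1(N_3)$); that verification is genuinely needed to justify "we may assume all other permutations are singular" and cannot simply be announced.

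A smaller but real gap concerns (ii). Your criterion -- the class of $\pi_1(z_az_b)=\tfrac12[z_a,z_b]$ survives iff both $(a,b)$ and $(b,a)$ are singular -- is correct, but the asserted alignment with the index set $\{(a,b):a>b,\ (a,b)\in\sfS_2\}$ is not automatic, precisely because the condition $s_1=1$ is not symmetric in the two entries: for instance $(1,-2)\in\sfS_2$ while $(-2,1)\notin\sfS_2$, so $z_{-2}z_1\in N$ and $\pi_1(z_{-2}z_1)=-\pi_1(z_1z_{-2})$ already lies in $\pi_1(N)_2$. The "case-by-case check" you appeal to is therefore not a formality; it is exactly where the content of (ii) sits, and as written your argument does not close it.
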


\begin{proof}
Part (i) and Part (ii) are clear. We now turn to Part (iii). Let $a,b,c$ be any three integers. Direct computation yields
\begin{equation}\label{equ:depth3Rels}
\begin{split}
& 6\pi_1(z_a z_b z_c)=6\pi_1(z_c z_b z_a)=[z_a,z_b,z_c]-[z_b,z_c,z_a]= [z_a,z_c,z_b]-2[z_b,z_c,z_a],\\
&  6\pi_1(z_a z_c z_b)=6\pi_1(z_b z_c z_a)=[z_a,z_c,z_b]-[z_c,z_b,z_a]= [z_a,z_c,z_b]+[z_b,z_c,z_a],\\
&  6\pi_1(z_b z_a z_c)=6\pi_1(z_c z_a z_b)=[z_b,z_a,z_c]-[z_a,z_c,z_b]=-2[z_a,z_c,z_b]+[z_b,z_c,z_a],
\end{split}
\end{equation}
where $[z_a,z_b,z_c]:=\big[[z_a,z_b],z_c\big]$. By \eqref{equ:depth3Rels} the dimension of
the space generated by $\pi_1(\sigma(z_az_bz_c))$ for
all permutations $\sigma$ of the three letters $z_a,z_b$ and $z_c$ is at most two.
If $(a,b,c)\in\sfS_3$ and by exchanging
two adjacent descending components we get a non-singular point
then this must be caused either by one component being equal to 1 or by the special singularity
condition in depth two depending on the parity of the sum of the first
two components. In either case, it can be proved by an easy combinatorial
argument that $\pi_1(\sigma(z_az_bz_c))\in \pi_1(N_3)$ for all $\sigma$,
so these words give no contribution to $W'$.\\

Now we assume $a>b>c$. We can get all permutations of $(a,b,c)$
by repeatedly exchanging two adjacent descending components.
If $(a,b,c)\in\sfS_3$ then by the
above paragraph we may assume all of its permutations are singular.
Therefore we obtain the basis $\myB_1$.
If $(a,b,c)\in \sfN_3$, $(b,a,c), (a,c,b)\in \sfS_3$ then by the
first paragraph again we may assume
all other words are singular, leading to $\myB_2$.
On the other hand, if $(a,b,c),(b,a,c)\in \sfN_3$ then by \eqref{equ:depth3Rels} we have
\begin{equation*}
   \pi_1(z_az_cz_b)=-\pi_1(z_az_bz_c)-\pi_1(z_bz_az_c)\in \pi_1(N_3).
\end{equation*}
Similarly, $(a,b,c),(a,c,b)\in \sfN_3$ implies that $\pi_1(z_bz_az_c)\in \pi_1(N_3)$.
The argument for singular words with repeating letters is similar and thus is left
to the interested reader.
\end{proof}

\begin{remark}\label{rem:lie}
In length $n\geq 4$ it is no longer true that $\pi_1=\pi_1^\lor$. Considering $\sigma=(2413)$ together with its inverse $\sigma^{-1}=(3142)$, we have $d(\sigma)=1$ and $d(\sigma^{-1})=2$, hence, $c(\sigma)=-1/12$ and $c(\sigma^{-1})=1/12$. Any permutation $\tau\in S_4$ different from $\sigma$ and $\sigma^{-1}$ verifies $c(\tau)=c(\tau^{-1})$. Hence, we get for any $z_1,z_2,z_3,z_4\in Y$:
$$\pi_1(z_1z_2z_3z_4)=\pi_1^\lor(z_1z_2z_3z_4)+\frac{1}{6}(z_2z_4z_1z_3-z_3z_1z_4z_2).$$
This clearly shows that $\pi_1(z_1z_2z_3z_4)$ does not belong to $\hbox{Lie}(Y)$.
\end{remark}

Solutions to the renormalisation problem of MZVs are compatible with both the quasi-shuffle relations as well as the meromorphic continuation of MZVs. However, these two requirements are not sufficient to obtain a unique solution. We have shown that all solutions are comparable in terms of a free and transitive group action. The vector space $W'$ introduced in \eqref{eq:Wprime} permits a precise characterisation of the degrees of freedom, which are left unspecified by the renormalisation of MZVs. Therefore it is natural to ask for an explicit description of $W'$ via a basis. In the final part of our paper we have calculated a basis up to degree three. Remark \ref{rem:lie} indicates that going beyond this order in the construction of a basis of $W'$ is a rather complicated combinatorial problem involving methods from free Lie algebra theory.

\bibliographystyle{abbrv}
\bibliography{library}

\begin{thebibliography}{10}

\bibitem{Akiyama01a}
S.~Akiyama, S.~Egami, and Y.~Tanigawa.
\newblock Analytic continuation of multiple zeta-functions and their values at
  non-positive integers.
\newblock {\em Acta Arith.}, 98(2):107--116, 2001.

\bibitem{Akiyama01b}
S.~Akiyama and Y.~Tanigawa.
\newblock Multiple zeta values at non-positive integers.
\newblock {\em Ramanujan J.}, 5(4):327--351 (2002), 2001.

\bibitem{Bouillot13}
O.~Bouillot.
\newblock {Multiple Bernoulli Polynomials and Numbers}.
\newblock {\em preprint}, 2014.

\bibitem{Broadhurst13}
D.~Broadhurst.
\newblock Multiple zeta values and modular forms in quantum field theory.
\newblock In {\em Computer Algebra in Quantum Field Theory}, Texts \&
  Monographs in Symbolic Computation, pages 33--73. Springer, 2013.

\bibitem{Brown12}
F.~Brown.
\newblock {Mixed Tate motives over $\mathbb{Z}$.}
\newblock {\em Ann. of Math.}, 175:949--976, 2012.

\bibitem{Cartier07}
P.~Cartier.
\newblock A primer of {H}opf algebras.
\newblock In {\em Frontiers in number theory, physics, and geometry. {II}},
  pages 537--615. Springer, Berlin, 2007.

\bibitem{Connes00}
A.~{Connes} and D.~{Kreimer}.
\newblock Renormalization in quantum field theory and the {R}iemann--{H}ilbert
  problem. {I}. {T}he {H}opf algebra structure of graphs and the main theorem.
\newblock {\em Comm. Math. Phys.}, 210(1):249--273, 2000.

\bibitem{Ebrahimi15b}
K.~{Ebrahimi-Fard}, D.~{Manchon}, and J.~{Singer}.
\newblock {Renormalisation of q-regularised multiple zeta values}.
\newblock {\em ArXiv e-prints}, Aug. 2015.

\bibitem{Ecalle81}
J.~{Ecalle}.
\newblock {Les fonctions r\'esurgentes, Volume 2}.
\newblock {\em {Publications Math\'ematiques d'Orsay}}, 1981.

\bibitem{Euler1776}
L.~{Euler}.
\newblock {Meditationes circa singulare serierum genus}.
\newblock {\em {Novi Commentarii academiae scientiarum Petropolitanae}},
  20:140--186, 1776.

\bibitem{Furusho15}
H.~{Furusho}, Y.~{Komori}, K.~{Matsumoto}, and H.~{Tsumura}.
\newblock {Desingularization of complex multiple zeta-functions}.
\newblock {\em ArXiv e-prints}, Aug. 2015.

\bibitem{Guo08}
L.~Guo and B.~Zhang.
\newblock Renormalization of multiple zeta values.
\newblock {\em J. Algebra}, 319(9):3770--3809, 2008.

\bibitem{Hoffman97}
M.~Hoffman.
\newblock {The Algebra of Multiple Harmonic Series}.
\newblock {\em J. Algebra}, 194(2):477 -- 495, 1997.

\bibitem{Hoffman00}
M.~Hoffman.
\newblock {Quasi-shuffle products}.
\newblock {\em J. Algebraic Combin.}, 11(1):49--68, 2000.

\bibitem{Hoffman05}
M.~Hoffman.
\newblock {Algebraic aspects of multiple zeta values}.
\newblock In {\em {Zeta functions, topology and quantum physics}}, volume~14 of
  {\em {Dev. Math.}}, pages 51--73. Springer, New York, 2005.

\bibitem{Ihara06}
K.~{Ihara}, M.~{Kaneko}, and D.~{Zagier}.
\newblock {Derivation and double shuffle relations for multiple zeta values}.
\newblock {\em Compos. Math.}, 142:307--338, 2006.

\bibitem{Krattenthaler07}
C.~Krattenthaler and T.~Rivoal.
\newblock An identity of {A}ndrews, multiple integrals, and very-well-poised
  hypergeometric series.
\newblock {\em Ramanujan J.}, 13(1-3):203--219, 2007.

\bibitem{Loday94}
J.~Loday.
\newblock {S{\'e}rie de Hausdorff, idempotents Eul{\'e}riens et alg\`ebres de
  Hopf}.
\newblock {\em {Expositiones Mathematicae}}, 12:165--178, 1994.

\bibitem{Manchon08}
D.~{Manchon}.
\newblock Hopf algebras in renormalisation.
\newblock In {\em Handbook of algebra. {V}ol. 5}, volume~5 of {\em Handb.
  Algebr.}, pages 365--427. Elsevier/North-Holland, Amsterdam, 2008.

\bibitem{Manchon10}
D.~Manchon and S.~Paycha.
\newblock Nested sums of symbols and renormalized multiple zeta values.
\newblock {\em Int. Math. Res. Not. IMRN}, (24):4628--4697, 2010.

\bibitem{Patras91}
F.~Patras.
\newblock {Construction g{\'e}om{\'e}trique des idempotents eul{\'e}riens.
  Filtration des groupes de polytopes et des groupes d'homologie de
  Hochschild}.
\newblock {\em {Bulletin de la Soci{\'e}t{\'e} Math{\'e}matique de France}},
  119(2):173--198, 1991.

\bibitem{Quillen69}
D.~Quillen.
\newblock Rational homotopy theory.
\newblock {\em Ann. of Math. (2)}, 90:205--295, 1969.

\bibitem{Solomon68}
L.~Solomon.
\newblock {On the Poincar\'e--Birkhoff--Witt theorem}.
\newblock {\em {Journal of Combinatorial Theory}}, 4(4):363--375, 1968.

\bibitem{Zagier94}
D.~Zagier.
\newblock {Values of zeta functions and their applications}.
\newblock In {\em First European Congress of Mathematics}, volume 120, pages
  497--512. Birkh\"auser-Verlag, Basel, 1994.

\bibitem{Zagier12}
D.~Zagier.
\newblock {Evaluation of the multiple zeta values
  $\zeta(2,\dots,2,3,2,\dots,2)$}.
\newblock {\em Ann. of Math.}, 175:977--1000, 2012.

\bibitem{Zavialov90}
O.~Zavialov.
\newblock {\em {Renormalized Quantum Field Theory}}.
\newblock Kluwer Acad. Publ., 1990.

\bibitem{Zhao00}
J.~Zhao.
\newblock Analytic continuation of multiple zeta functions.
\newblock {\em Proc. Amer. Math. Soc.}, 128(5):1275--1283, 2000.

\bibitem{Zudilin03}
W.~Zudilin.
\newblock {Algebraic relations for multiple zeta values}.
\newblock {\em Uspekhi Mat. Nauk}, 58:3--32, 2003.

\end{thebibliography}

\end{document}